\def\NAT@def@citea{\def\@citea{\NAT@separator}}
\theoremstyle{plain}
\newtheorem{theorem}{Theorem}[section]
\newtheorem{assumption}[theorem]{Assumption}
\newtheorem{lemma}[theorem]{Lemma}
\theoremstyle{definition}
\theoremstyle{remark}
\newcommand{\red}{\textcolor{black}}
\newcommand{\blue}{\textcolor{black}}
\begin{document}

\articletype{ARTICLE TEMPLATE}

\title{Mini-batch stochastic subgradient for functional constrained optimization}

\author{
\name{Nitesh Kumar Singh\textsuperscript{a}\thanks{Corresponding author: Ion Necoara, email: ion.necoara@upb.ro. \\ In this  variant we have corrected some derivations from the  journal version: Optimization, 73(7), 2159–2185, 2023.}, Ion Necoara\textsuperscript{a,b}, Vyacheslav Kungurtsev\textsuperscript{c}}
\affil{\textsuperscript{a}Automatic Control and Systems Engineering Department,  University Politehnica  Bucharest,   Spl. Independentei 313, 060042 Bucharest, Romania; \textsuperscript{b}Gheorghe Mihoc-Caius Iacob Institute of Mathematical Statistics and Applied Mathematics of the  Romanian Academy, 050711 Bucharest, Romania; \textsuperscript{c}Computer Science Department, Czech Technical University, Karlovo Namesti 13, 12135 Prague, Czech Republic. }
}

\maketitle

\begin{abstract}
In this paper we consider finite sum composite \red{convex} optimization problems with  many functional  constraints. The objective function is expressed as  a finite sum of two terms, one of which admits easy computation of  (sub)gradients while the other is amenable to proximal evaluations. We assume a generalized bounded gradient condition on the objective which allows us to simultaneously tackle both smooth and nonsmooth problems. We also consider the cases of both with and without a \red{strong convexity} property. Further, we  assume that each constraint set is  given as the level set of a convex but not necessarily differentiable function.  We reformulate the constrained finite sum problem into a  stochastic optimization problem  for which the stochastic subgradient projection method  from \cite{NecSin:21} specializes  to a collection of mini-batch variants,  with different mini-batch sizes for the objective function and functional constraints, respectively. More specifically, at each iteration, our algorithm takes a mini-batch stochastic proximal subgradient step aimed at  minimizing the objective function and then a subsequent mini-batch subgradient projection step  minimizing the feasibility violation. By specializing different mini-batching strategies,  we derive exact expressions for the stepsizes as a function of  the mini-batch size and in some cases we also derive insightful stepsize-switching rules which describe when one should switch from a constant to a decreasing stepsize regime.  We  also prove sublinear convergence rates for the mini-batch subgradient projection algorithm which depend explicitly on the mini-batch sizes and on the properties of the objective function.  Numerical results also show a better performance of our mini-batch scheme over its single-batch counterpart.
\end{abstract}

\begin{keywords}
Finite sum convex optimization, functional constraints, stochastic subgradient method, mini-batching, convergence  rates.
\end{keywords}


\section{Introduction}
In this work we consider the following composite  \red{convex}  optimization problem with many functional constraints:

\begin{equation}
	\label{eq:prob}
	\begin{array}{rl}
		F^* = \min_{x \in \mathcal{Y} \subseteq \mathbb{R}^n} & F(x)  \quad \left(:= \frac{1}{N} \sum_{i=1}^{N}(f_i(x) + g_i(x)) \right)\\
		\text{subject to } & h_j(x) \le 0 \;\;\; \forall j=1:m,
	\end{array}
\end{equation}
where $f_i,\; g_i$ and $h_j$ are  proper lower semi-continuous \red{convex} functions,  $\mathcal{Y}$ is a simple closed convex set and the number of sum-additive objective function components, $N$, and/or the number of constraints, $m$, are assumed to be large.  \blue{Hence, we separate the feasible set in two parts:  one set, $\mathcal{Y}$, admits easy projections and the other part is not easy for projection as it is  described by the level sets of  some convex functions $h_j$'s.}   This model is very general and covers many practical optimization applications,  including   machine learning and statistics \cite{Vap:98,BhaGra:04}, distributed control \cite{NedNec:14}, signal processing \cite{Nec:20,Tib:11}, operations research and finance \cite{RocUry:00}. It can be remarked that more commonly, one sees a single $g$  representing the regularizer on the parameters. However, we are interested in the more general problem as there are also applications where one encounters more $g$'s, such as e.g.,  in Lasso problems with mixed $\ell_1-\ell_2$ regularizers, and  in the case of regularizers with overlapping groups \cite{JacObo:09}. Multiple functional constraints can arise from  multistage stochastic programming with equity constraints~\cite{YinBuy:21}, robust classification \cite{BhaGra:04}, and fairness constraints in machine learning~\cite{ZafVal:19}. In the aforementioned applications the corresponding problems are becoming increasingly large in terms of both the number of variables and the size of training data. The use of regularizers and constraints in a composite objective structure make proximal gradient methods particularly natural for these classes of problems, see, e.g.~\cite{Nec:20,RosVil:14}.  Moreover, when the composite objective function is expressed as a large finite sum of functions, then by computational practical necessity, we may have access only to stochastic estimates via samples of the (sub)gradients, proximal operators or projections. In this setting, the most  popular stochastic methods are the stochastic gradient descent (SGD) \cite{RobMon:51, NemYud:83, HarSin:16, Nec:20}  and the stochastic proximal point (SPP) algorithms \citep{MouBac:11, NemJud:09, Nec:20, PatNec:17, RosVil:14}.  However, in practice it has been noticed that these stochastic  methods converge slowly. To improve the convergence speed, one can use  techniques such as mini-batching \citep{AsiDuc:20,  AngNec:19, PenWan:19, RobRic:19, DucSin:09,RenZho:21}, averaging  \cite{NemJud:09,PolJud:92,YanLin:16} or variance reduction strategies \cite{JohZha:13,LinMai:15,GorRic:20}.  In this work we consider a versatile mini-batching framework for a stochastic subgradient projection method for solving the constrained finite sum problem \eqref{eq:prob}, and demonstrate, theoretically and experimentally, its favorable convergence properties. \\ 

\medskip  

\noindent The papers most related to our work are \cite{AngNec:19,RobRic:19,NecSin:21}. However, the optimization problem, the algorithm  and consequently the convergence analysis are different from the present paper.  In particular,  \cite{AngNec:19}  considers the optimization problem \eqref{eq:prob} with a single  nonsmooth convex function $f$ and  $g_i \equiv 0$ for all $i=1:N$.  Additionally, the objective function $f$ is assumed to be strongly convex.  Under this setting,  \cite{AngNec:19}  proposes   a  stochastic subgradient scheme with mini-batch for constraints and derives a sublinear convergence rate for it, whose proof heavily relies on the strong convexity property  of $f$, bounded subgradients of $f$ assumption, and uniqueness of the optimal solution. In this work,   these conditions do not hold anymore as we consider more general assumptions (i.e.,  smooth/nonsmooth functions $f_i$'s,  objective function $F$ is convex or satisfies a strong convexity condition) and   a more general optimization problem (i.e.,  finite sum composite  objective). Moreover, our mini-batch subgradient method differs from the one in \cite{AngNec:19}:  we consider mini-batching to handle both the objective function and the constraints, while  \cite{AngNec:19} considers only mini-batching for constraints; moreover, the data selection rules used to form the mini-batches  are also different in these two papers. Due to these distinctions, our convergence analysis and  rates are not the same as the ones in \cite{AngNec:19}.  In \cite{RobRic:19} an unconstrained finite sum problem is considered, i.e., in problem \eqref{eq:prob}  $g_i \equiv 0$ for all $i= 1:N$, and $h_j \equiv 0$ for all $j=1:m$, and reformulated as a stochastic optimization problem. This reformulation is then solved using SGD. In  this paper we extend the stochastic reformulation  from \cite{RobRic:19} to the  finite sum composite objective function in \eqref{eq:prob}  and add a new stochastic reformulation for the constraints. Then,  we use the stochastic subgradient projection method from  \cite{NecSin:21}   to solve  the reformulated problem, leading to an array of mini-batch variants depending on the data selection rule used to form mini-batches.  This is the first time such an analysis is performed on the general problem \eqref{eq:prob}, and most of our mini-batch variants of the stochastic subgradient projection method are new.\\

\medskip 

\noindent   \textbf{Contributions}.   In this paper we propose   mini-batch variants of  stochastic subgradient projection algorithm for solving the constrained finite sum composite \red{convex} problem \eqref{eq:prob}.  The main advantage of our formulation  is that  the theoretical convergence guarantees of the corresponding numerical scheme only require very basic properties of our problem functions (convexity, bounded gradient type conditions) and  access only to  stochastic  (sub)gradients and  proximal operators. The main  contributions  are:

\medskip 

\noindent \textit{Stochastic reformulation}:  we propose an equivalent   stochastic reformulation  for  the  finite sum composite objective function  and for the constraints of problem  \eqref{eq:prob} using  arbitrary sampling rules. We also extend the assumptions considered for the original problem  to the new stochastic reformulation and derive explicit bounds  for the corresponding constants appearing in the assumptions, which  depend  on the random variables that define the stochastic problem.  By specializing our bounds to different mini-batching strategies, such as partition sampling and nice sampling, we derive exact expressions for these constants. 

\medskip 

\noindent \textit{Convergence rates}:  the stochastic problem  is then solved with the  stochastic subgradient projection method  from \cite{NecSin:21}, which  specializes to a range of possible mini-batch schemes with different batch sizes for the objective function and functional constraints.   Based on the constants defining the assumptions, we derive exact expressions for the stepsize as a function of the mini-batch size. Moreover, when the objective function satisfies a strong convexity  condition we also derive informative stepsize-switching rules which describe when one should switch from a constant to a decreasing stepsize regime. At each iteration, the algorithm takes a mini-batch stochastic proximal subgradient step aimed at minimizing the objective function, followed by a feasibility step  for  minimizing the feasibility violation of the observed mini-batch of random constraints. We prove sublinear convergence rates  for a weighted averages of the iterates  in terms of expected distance to the constraint set, as well as for  expected optimality of the function values/distance to the optimal set. Our rates depend explicitly on the mini-batch sizes and on the properties of the problem functions.  This work is the first analysis of a mini-batch stochastic subgradient projection method on the general problem \eqref{eq:prob}, and most of our mini-batch variants  were never explicitly considered in the literature~before. \\


\noindent   \textbf{Content}.  In Section 2 we introduce some basic notation and present the main assumptions.  In Section 3 we provide a stochastic reformulation for the original  problem, present several sampling strategies and derive some relevant bounds. In Section 4 we present a mini-batch stochastic subgradient projection algorithm and analyze its convergence. Finally, in Section 5, the performance on numerical simulations is presented,  providing support for the effectiveness of our method.


\section{Notations and assumptions}
For the finite sum problem \eqref{eq:prob} we assume that   $\mathcal{Y}$ is a simple convex set, i.e., it is  easy to evaluate the projection onto $\mathcal{Y}$. Moreover, \blue{we assume that the effective domains of the functions $f_i,\; g_i$ and $h_j$ contain the interior of $\mathcal{Y}$.} Additionally, all the functions $g_i$ have the common domain,  $\text{dom} \, g$. We make no assumptions on the differentiability of $f_i$ and use,  with some abuse of notation, the same expression for the gradient or the subgradient of $f_i$ at $x$, that is $\nabla f_i(x) \in \partial f_i(x)$, where the subdifferential $\partial f_i(x)$ is either a singleton or a nonempty  set for any $i = 1:N$. Similarly for $g_i$'s. Throughout the paper, the subgradient of $h(x, \xi)$ w.r.t. $x$,  $\nabla_x h(x,\xi)$, is denoted simply by $\nabla h(x,\xi)$.  Let us denote  $F_i(x) = f_i(x) + g_i(x)$. Assuming $g_i$'s are convex functions, then from basic calculus rules we have $\nabla F_i(x) = \nabla f_i(x) + \nabla g_i(x)$. Further, for a given $x \in \mathbb{R}^N$,  $\|x\|$ denotes its Euclidean norm and $(x)_+ = \max \{0,x\}$. The feasible set of \eqref{eq:prob} is denoted by:
\[\mathcal{X}=\left\{ x \in  \mathcal{Y}: \;  h_j(x)\le 0 \;\; \forall j = 1:m \right\}. \]
We assume the optimal value  $F^* > -\infty$ and  $\mathcal{X}^* \neq \phi$ denotes  the optimal set, i.e.:
\[F^*=\min_{x  \in \mathcal{X}} F(x) := \frac{1}{N} \sum_{i=1}^{N}F_i(x), \quad \mathcal{X}^*=\{x\in \mathcal{X} \mid F(x)=F^*\}.\] 
For any $x \in \mathbb{R}^n$ we denote its projection onto the optimal set  $\mathcal{X}^*$ by $\bar{x}$, that is:  
$$\bar{x} = \Pi_{\mathcal{X}^*}(x).$$


\noindent   We consider additionally the following assumptions. First, we assume that the objective function satisfies some bounded gradient condition.
\begin{assumption}
	\label{assumption1}
	The (sub)gradients of $F$ satisfy the following bounded gradient condition: there exist nonnegative constants  $L \geq 0$ and $B \geq 0$ such that:
	\begin{equation}
		\label{as:main1_spg} B^2 +  L(F(x) - F^*) \geq  \frac{1}{N} \sum_{i=1}^N \| \nabla F_i(x) \|^2  \quad  \forall x \in  \mathcal{Y}.
	\end{equation}
\end{assumption}

\noindent To the best of our knowledge this assumption was first introduced in \cite{Nec:20} and further studied in \cite{NecSin:21,RobRic:19}. We present two examples of functions satisfying this assumption below (see \cite{NecSin:21} for proofs).\\

\noindent \textbf{Example 1} [Non-smooth (Lipschitz) functions satisfy Assumption \ref{assumption1}]:
Assume that the convex functions $f_i$ and $g_i$ have bounded
(sub)gradients:
\[ \|\nabla f_i (x)\| \leq B_{f_i}  \quad \text{and} \quad
\|\nabla g_i(x) \| \leq B_{g_i} \quad \forall x  \in \mathcal{Y}.  \]
Then, Assumption \ref{assumption1} holds with $ L=0 \;\;  \text{and} \;\; B^2 = \frac{2}{N} \sum_{i=1}^N (B_{f_i}^2 + B_{g_i}^2).$\\

\noindent \textbf{Example 2} [Smooth (Lipschitz gradient) functions satisfy Assumption \ref{assumption1}]:  Condition  \eqref{as:main1_spg}  contains the class of convex functions formed as a sum of two \red{convex} terms, one   having Lipschitz continuous gradients with constants $L_{f_i}$'s  and the other  having bounded subgradients over bounded set $\mathcal{Y}$ with constant $B_g$. 
Then, Assumption \ref{assumption1} holds with (here $D$ denotes the diameter of $\mathcal{Y}$):
\[  L = 4 \max_{i=1:N} L_{f_i} \; \text{and} \; B^2 \!=\!  \frac{4}{N} \sum_{i=1}^N B_{g}^2 + 4 \max_{\bar{x} \in \mathcal{X}^*} \left( \!\frac{1}{N} \sum_{i=1}^N \|\nabla f_i(\bar{x})\|^2 + D \max_{i=1:N} L_{f_i} \|\nabla F(\bar{x})\| \!\right)\!. \]

\noindent \red{In our analysis below we also assume $F$ to satisfy a (strong) convexity condition: 
	\begin{assumption}
		\label{assumption2}
		The function  $F$ satisfies a \red{(strong) convex} condition on $\mathcal{Y}$, i.e., there exists non-negative constant $\mu \geq 0$ such that:
		\begin{equation}
			\label{as:strong_spg} 
			F(y) \geq F(x) + \langle \nabla F(x), y-x \rangle +  \frac{\mu}{2}  \|y  - x\|^2 \quad \forall x, y \in \mathcal{Y}.
		\end{equation}
	\end{assumption}
\noindent  Note that when $\mu=0$ relation 	\eqref{as:strong_spg} states that $F$ is convex on $\mathcal{Y}$.}  Additionally, we assume the following bound for the functional constraints:
\begin{assumption}
	\label{assumption3}
	The functional constraints $h_j$  have  bounded subgradients on $\blue{\mathcal{Y}}$, i.e., there exists $B_j>0$ such that:
	\begin{equation}\label{ass:3}
		\|  \nabla h_j(x) \| \le B_j \quad  \forall \, \nabla h_j(x)  \in \partial h_j(x),  \; x \in \blue{\mathcal{Y}}, \;\;   j = 1:m. 
	\end{equation}
\end{assumption}
\noindent Note that this assumption implies that the functional constraints $h_j$ are Lipschitz continuous. Additionally, we assume a H\"{o}lderian growth condition  for the constraints.

\begin{assumption}
	\label{assumption4}
	The functional constraints satisfy additionally the following H\"{o}lderian growth condition for some constants $\bar{c}>0$ and $q\ge 1$:
	\begin{equation}
		\label{eq:constrainterrbound}
		\emph{dist}^{2q}(y, \mathcal{X}) \le \bar{c}  \left( \max_{ j=1:m} ( h_j (y))\right)_+^2   \;\;\; \forall y \in \blue{\mathcal{Y}}.  
	\end{equation}
\end{assumption} 
\noindent Note that this assumption has been used in \cite{RenZho:21} in the context of convex feasibility problems and  in \cite{NecSin:21} for $q=1$ in the context of stochastic optimization problems.  It holds e.g., when the feasible set $\mathcal{X}$ has an interior point, see e.g.  \citep{LewPan:98}, or when the feasible set is polyhedral. However, Assumption \ref{assumption4} holds for more general sets, e.g., when a strengthened Slater condition holds for the collection of functional constraints, such as the generalized Robinson condition, as detailed in \citep{LewPan:98} Corollary 3.


\section{Stochastic reformulation}
\noindent In this section we reformulate the deterministic problem \eqref{eq:prob} into a stochastic one wherein the objective function is expressed in the form of an expectation. We analyze its main properties  and then use the machinery of stochastic sampling to devise efficient mini-batch schemes. For this we use an arbitrary sampling paradigm. More precisely,  let $(\Omega_1, \mathcal{F}_1, \mathbb{P}_1)$ be a finite probability space with $\Omega_1 = \{ 1,...,N \}$ and a random vector $\zeta \in \mathbb{R}^N$ drawn from some probability distribution $\mathbb{P}_1$ having the property $\mathbb{E} [\zeta^i] = 1 \; \text{for all} \; i =1:N$. Then, let us define the following functions:
\begin{align}\label{eq:reformulation_f}
	f(x, \zeta) = \frac{1}{N} \sum_{i=1}^{N} \zeta^i f_i(x)\;\; \text{and}\;\; g(x, \zeta) = \frac{1}{N} \sum_{i=1}^{N} \zeta^i g_i(x).
\end{align}

\noindent Note that if $f_i$ and $g_i$, with $i=1:N$, are convex functions and $\zeta_i \ge 0$, then $f(\cdot,\zeta)$ and $g(\cdot, \zeta)$ are also convex functions. Also consider a probability space $(\Omega_2, \mathcal{F}_2, \mathbb{P}_2)$, with $\Omega_2 = \{ 1,...,m \}$ and a  random vector $\xi \in \mathbb{R}^m$ drawn from some probability distribution $\mathbb{P}_2$ having the property $\mathbb{E} [\xi^j] > 0$ and $0 \leq \xi^j \le \bar{\xi}$, for all $ j =1:m$ and some $\bar{\xi} < \infty$. Then, let us define the functional constraints:
\begin{align}\label{eq:reformulation_h}
	h(x, \xi) = \max_{j = 1:m} (\xi^j h_j(x)) .
\end{align}

\noindent Since $\xi^j \geq 0$, then $h(\cdot, \xi)$ is a convex function provided that $h_j, \;\text{with}\; j =1:m$, are convex functions. Then, we can define a stochastic reformulation of the original optimization problem~\eqref{eq:prob}:
\begin{equation}
	\label{eq:stochasticProb}
	\begin{array}{rl}
		F^* = & \min\limits_{x \in \mathcal{Y} \subseteq \mathbb{R}^n} \;  \mathbb{E}[f(x,\zeta) + g(x,\zeta)]\\
		& \text{subject to } \;  h(x,\xi) \le 0 \;\; \forall \xi \in \mathcal{F}_2.
	\end{array}
\end{equation}

\noindent Note that $F(x, \zeta) = f(x, \zeta) + g(x, \zeta)$ and $\nabla F(x, \zeta) = \nabla f(x, \zeta) + \nabla g(x, \zeta)$ are unbiased estimators of $F(x)$ and $\nabla F(x)$, respectively. Indeed:
\begin{align*}
	\mathbb{E} [\nabla F(x, \zeta)] \overset{\eqref{eq:reformulation_f}}{=} \frac{1}{N} \sum_{i=1}^{N} \mathbb{E}[\zeta^i] (\nabla f_i(x) + \nabla g_i(x)) \overset{\mathbb{E} [\zeta^i] = 1}{=} \nabla F(x). 
\end{align*}

\noindent In the following lemma we prove that under some basic conditions on the random vectors, the deterministic problem \eqref{eq:prob} is equivalent to the stochastic problem \eqref{eq:stochasticProb}.
\begin{lemma}
	Let the random vectors $\zeta$ and $\xi$ satisfy $\mathbb{E} [\zeta^i] = 1 \; \; \text{for all}\;\; i =1:N$ and $\xi \ge 0$, with $\mathbb{E} [\xi^j] > 0 \; \; \text{for all}\;\; j =1:m$. Then, the the deterministic problem \eqref{eq:prob} is equivalent to stochastic problem \eqref{eq:stochasticProb}.
\end{lemma}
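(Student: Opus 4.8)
The plan is to establish equivalence by showing that the two problems share both the same objective and the same feasible set, which then immediately yields equality of optimal values and of optimal solution sets.

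First I would dispatch the objective. Using linearity of expectation together with the defining property $\mathbb{E}[\zeta^i]=1$, I would compute
\begin{align*}
\mathbb{E}[f(x,\zeta) + g(x,\zeta)] = \frac{1}{N}\sum_{i=1}^N \mathbb{E}[\zeta^i]\,(f_i(x)+g_i(x)) = \frac{1}{N}\sum_{i=1}^N (f_i(x)+g_i(x)) = F(x),
\end{align*}
so the stochastic objective coincides pointwise with $F$ on $\mathcal{Y}$. This step is routine, as it mirrors exactly the unbiasedness computation for $\nabla F(x,\zeta)$ already displayed above.

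The substantive part is the equivalence of feasible sets, i.e. that $h_j(x)\le 0$ for all $j=1:m$ holds if and only if $h(x,\xi) = \max_{j}(\xi^j h_j(x)) \le 0$ for every realization of $\xi$. For the forward direction I would use $\xi^j \ge 0$: if each $h_j(x)\le 0$, then each product $\xi^j h_j(x)\le 0$, hence their maximum is nonpositive for every realization of $\xi$, so $x$ is feasible for \eqref{eq:stochasticProb}. For the converse I would fix an index $j$ and argue that the assumption $\mathbb{E}[\xi^j]>0$ together with $\xi^j\ge 0$ forces $\mathbb{P}_2(\xi^j>0)>0$, so there exists a realization with $\xi^j>0$; for that realization $\xi^j h_j(x) \le \max_k (\xi^k h_k(x)) = h(x,\xi)\le 0$, and dividing by $\xi^j>0$ gives $h_j(x)\le 0$. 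Since $j$ was arbitrary, $x$ satisfies all the original constraints.

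The main obstacle is the correct probabilistic reading of the stochastic constraint and, in the converse direction, the use of $\mathbb{E}[\xi^j]>0$ to extract a realization that activates the $j$-th constraint: the strict positivity of the expectation is precisely what prevents any coordinate $\xi^j$ from vanishing almost surely and thereby silently discarding a constraint (whereas $\xi^j\ge 0$ is what keeps the forward direction from reversing the inequality). Combining the two parts, the feasible sets and the objectives of \eqref{eq:prob} and \eqref{eq:stochasticProb} agree, hence $F^*$ and $\mathcal{X}^*$ are identical for the two formulations, which establishes the claimed equivalence.
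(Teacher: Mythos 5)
Your proposal is correct and follows essentially the same route as the paper: verify the objectives coincide via $\mathbb{E}[\zeta^i]=1$, then show the feasible sets agree using $\xi\ge 0$ in one direction and $\mathbb{E}[\xi^j]>0$ in the other. The only (immaterial) difference is in the converse direction, where the paper takes expectation of $\xi^j h_j(x)\le 0$ and divides by $\mathbb{E}[\xi^j]>0$, while you extract a single realization with $\xi^j>0$; both are valid on the finite probability space used here.
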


\begin{proof}
	For the objective function in problem \eqref{eq:stochasticProb}, we have:
	\begin{align*}
		\mathbb{E}[f(x,\zeta) + g(x,\zeta)] & = \mathbb{E}\left[\frac{1}{N} \sum_{i=1}^{N} \zeta^i (f_i(x) + g_i(x)) \right]\\
		& = \frac{1}{N} \sum_{i=1}^{N} \mathbb{E} [\zeta^i] (f_i(x) + g_i(x)) = \frac{1}{N} \sum_{i=1}^{N}  (f_i(x) + g_i(x)) \\
		& = F(x).
	\end{align*}  
	For the functional constraints, if $x$ is feasible for the stochastic problem \eqref{eq:stochasticProb}, i.e. $h(x, \xi) \le 0$, then we have:
	\begin{align*}
		h(x, \xi)  &=  \max_{j=1:m} (\xi^j h_j(x))  \le 0  \implies  \xi^j h_j(x)  \le 0 \;\;\; \forall j=1:m.
	\end{align*}
	Taking expectation on both sides, we get:
	\begin{align*}
		\mathbb{E} [\xi^j  h_j(x)] \le 0 \overset{\mathbb{E} [\xi^j] > 0}{\implies}  h_j(x) \le 0 \;\;\; \forall j=1:m,
	\end{align*}
$x$ is feasible for the original problem \eqref{eq:prob}. On the other hand, if $h_j(x) \le 0$, for all $j = 1:m$, then using $\xi^j \ge 0$, we get:
	\begin{align*}
		\xi^j h_j(x) \le 0  \implies \max_{j=1:m} ( \xi^j h_j(x)) \le 0 \implies h(x, \xi) \le 0.
	\end{align*}  
	This concludes our proof.
\end{proof}


\subsection{Properties of  stochastic problem}
\noindent In this section we prove that the assumptions valid for the original problem \eqref{eq:prob} can be extended  to the stochastic reformulation \eqref{eq:stochasticProb}. Moreover, we derive explicit bounds  for the corresponding assumptions' constants depending  on the random variables that define the stochastic problem.  Let $\nabla \hat{F} (x) $ be the matrix of dimension $n\times N$ obtained by  arranging  $\nabla F_i (x)$'s as its columns. In the next lemma we prove that a stochastic bounded gradient type condition holds for the objective function of  problem~\eqref{eq:stochasticProb}.
\begin{lemma}\label{bddsubgrad_F}
	Let Assumption \ref{assumption1} hold and consider the random vector $\zeta$ satisfying $\mathbb{E} [\zeta^i] = 1$. Then, the (sub)gradients of $F(\cdot, \zeta)$ from the problem \eqref{eq:stochasticProb} satisfy a stochastic bounded gradient condition: 
	\begin{equation} \label{eq:bddsubgrad_F}
		\mathcal{B}^2 +  \mathcal{L} (F(x) - F^*) \geq \mathbb{E}_{\zeta} [\| \nabla F(x, \zeta) \|^2 ] \quad  \forall x \in  \mathcal{Y},
	\end{equation}
	with the parameters $\mathcal{B}^2 = \frac{\mathbb{E}[\|\zeta\|^2]}{N} B^2$ and $\mathcal{L} = \frac{\mathbb{E}[\|\zeta\|^2 ]}{N} L$.
\end{lemma}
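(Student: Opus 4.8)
The plan is to control the second moment $\mathbb{E}_\zeta[\|\nabla F(x,\zeta)\|^2]$ by decoupling the randomness carried by $\zeta$ from the deterministic subgradient data, and then to invoke Assumption \ref{assumption1} in its averaged form. First I would write the stochastic subgradient explicitly. Since by \eqref{eq:reformulation_f} we have $F(x,\zeta) = f(x,\zeta) + g(x,\zeta) = \frac{1}{N}\sum_{i=1}^N \zeta^i F_i(x)$, linearity of the (sub)differential yields $\nabla F(x,\zeta) = \frac{1}{N}\sum_{i=1}^N \zeta^i \nabla F_i(x) = \frac{1}{N}\nabla\hat{F}(x)\zeta$, where $\nabla\hat{F}(x)$ is the $n\times N$ matrix whose columns are the $\nabla F_i(x)$'s, as introduced just before the statement.

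The central step is a single application of the Cauchy--Schwarz inequality that separates $\zeta$ from the gradients. Viewing $\sum_{i=1}^N \zeta^i \nabla F_i(x)$ as a linear combination of the vectors $\nabla F_i(x)$ with scalar coefficients $\zeta^i$, one obtains $\big\|\sum_{i=1}^N \zeta^i \nabla F_i(x)\big\|^2 \le \big(\sum_{i=1}^N |\zeta^i|^2\big)\big(\sum_{i=1}^N \|\nabla F_i(x)\|^2\big) = \|\zeta\|^2 \sum_{i=1}^N \|\nabla F_i(x)\|^2$. Dividing by $N^2$ gives the pointwise bound $\|\nabla F(x,\zeta)\|^2 \le \frac{\|\zeta\|^2}{N^2}\sum_{i=1}^N \|\nabla F_i(x)\|^2$, in which the $\zeta$-dependence is now isolated in the scalar factor $\|\zeta\|^2$ while the remaining sum is deterministic for fixed $x$.

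Taking expectation over $\zeta$ and pulling the deterministic sum outside then produces
\[
\mathbb{E}_\zeta[\|\nabla F(x,\zeta)\|^2] \le \frac{\mathbb{E}[\|\zeta\|^2]}{N^2}\sum_{i=1}^N \|\nabla F_i(x)\|^2 = \frac{\mathbb{E}[\|\zeta\|^2]}{N}\cdot \frac{1}{N}\sum_{i=1}^N \|\nabla F_i(x)\|^2,
\]
where I have deliberately regrouped the factors so that the averaged quantity $\frac{1}{N}\sum_{i=1}^N \|\nabla F_i(x)\|^2$ appears exactly in the form required by Assumption \ref{assumption1}. Applying \eqref{as:main1_spg} to replace this average by $B^2 + L(F(x)-F^*)$ and distributing the prefactor $\frac{\mathbb{E}[\|\zeta\|^2]}{N}$ yields $\mathbb{E}_\zeta[\|\nabla F(x,\zeta)\|^2] \le \frac{\mathbb{E}[\|\zeta\|^2]}{N}B^2 + \frac{\mathbb{E}[\|\zeta\|^2]}{N}L\,(F(x)-F^*)$, which is precisely \eqref{eq:bddsubgrad_F} with $\mathcal{B}^2 = \frac{\mathbb{E}[\|\zeta\|^2]}{N}B^2$ and $\mathcal{L} = \frac{\mathbb{E}[\|\zeta\|^2]}{N}L$, valid for all $x\in\mathcal{Y}$ since \eqref{as:main1_spg} holds there.

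This argument is essentially computational, so there is no serious obstacle; the only point deserving care is the bookkeeping of the normalization constants. The two powers of $1/N$ coming from the definition of $\nabla F(x,\zeta)$ must be split as $\frac{1}{N}\cdot\frac{1}{N}$, with one factor absorbed into the averaged subgradient sum so that Assumption \ref{assumption1} applies verbatim and the other retained to give the stated constants. Note also that the bound uses only $\|\zeta\|^2\ge 0$ and hence does not require $\zeta\ge 0$; the hypothesis $\mathbb{E}[\zeta^i]=1$ is not needed for this inequality itself, but guarantees (via the earlier equivalence lemma) that $F^*$ and $F(x)$ are the correct deterministic quantities appearing on the right-hand side.
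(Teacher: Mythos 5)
Your proof is correct and follows essentially the same route as the paper: the paper bounds $\|\nabla\hat F(x)\zeta\|^2$ by the Frobenius norm of $\nabla\hat F(x)$ times $\|\zeta\|^2$, which is exactly your Cauchy--Schwarz step written in matrix form, and then both arguments apply Assumption \ref{assumption1} and take expectation. The only (immaterial) difference is the order of taking expectation versus invoking the assumption.
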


\begin{proof} Using the definition of $F(x,\zeta)$, we get: 
	
	\begin{align*} 
		\| \nabla F(x, \zeta) \|^2 & = \left\| \frac{1}{N} \sum_{i=1}^{N} \zeta^i \nabla F_i(x)  \right\|^2 = \frac{1}{N^2} \|\nabla \hat{F} (x) \zeta  \|^2 \le \frac{1}{N^2} \|\nabla \hat{F} (x)\|^2 \| \zeta  \|^2\\
		& \le \frac{1}{N^2} \|\nabla \hat{F} (x)\|_F^2 \| \zeta  \|^2 = \frac{\| \zeta  \|^2}{N} \left(\frac{1}{N} \sum_{i=1}^{N} \|\nabla F_i(x)\|^2\right) \\
		& \overset{\eqref{as:main1_spg}}{\le} \frac{\| \zeta  \|^2}{N} B^2 + \frac{\| \zeta  \|^2}{N} L(F(x) - F(\bar{x})),
	\end{align*}
	where the second inequality follows from the fact that the Frobenius norm is larger than the 2-norm of a matrix. Then, the statement  follows after taking expectation with respect to $\zeta$.
\end{proof}

\noindent \red{From Jensen's inequality, taking $x=x^* \in \mathcal{X}^*$ in  \eqref{as:main1_spg}, we get:
	\begin{equation}
		\label{as:main1_spg2} 
		B^2  \geq \mathbb{E}_\zeta[\| \nabla F(x^*, \zeta) \|^2 ] \ge \| \mathbb{E}_\zeta[\nabla F(x^*, \zeta)] \|^2 = \| \nabla F(x^*)\|^2 \quad \forall  x^* \in \mathcal{X}^*.
	\end{equation}
}

\noindent  Since $F(x,\zeta)$ is an unbiased estimator of $F(x)$, it also follows that if Assumption \ref{assumption2} holds for the original objective function, then the same condition is valid for the objective function of the stochastic problem \eqref{eq:stochasticProb} with the same constant $\mu$. Further, for a given $x$ let us define the set of active constraints by $J^*(x)=\{ j=1:m \; | \;  h(x, \xi) = \xi^j h_j(x)\}$. In the next lemma we provide a bounded subgradient  condition for the functional constraints of the stochastic problem \eqref{eq:stochasticProb}.
\begin{lemma}\label{bddsubgrad_h}
	Let Assumption \ref{assumption3} hold and consider the random vector $\xi \geq 0$ satisfying $ \mathbb{E}[\xi^j] > 0$ and $ \xi^{j} \le \bar{\xi}$, for all  $j=1:m$ and some $\bar{\xi} < \infty$. Then, the functional constraints $h(\cdot,\xi)$ of the problem \eqref{eq:stochasticProb} have  bounded subgradients on $\blue{\mathcal{Y}}$, i.e.: 
	\begin{align}\label{eq:bddsubgrad_h}
		\|  \nabla h(x,\xi) \| \le \mathcal{B}_h \quad   \forall  x  \in    \blue{\mathcal{Y}}  \;\;  \text{and } \;\;  \xi \in \mathcal{F}_2,
	\end{align}   
	where $\nabla h(x,\xi)  \in \partial h(x,\xi)$ and $ \mathcal{B}_h=\bar{\xi} \max_{ j=1:m} B_j. $
\end{lemma}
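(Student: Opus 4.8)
The plan is to exploit the fact that $h(\cdot,\xi)$ is the pointwise maximum of the finitely many proper convex functions $x\mapsto\xi^j h_j(x)$, $j=1:m$, and to invoke the standard characterization of the subdifferential of a max of convex functions. Concretely, I would first recall that for a finite maximum of convex functions the subdifferential at $x$ is contained in (and, under mild regularity, equals) the convex hull of the subdifferentials of the functions attaining the maximum, i.e.
\[
\partial h(x,\xi)\subseteq \operatorname{conv}\Big(\bigcup_{j\in J^*(x)} \xi^j\,\partial h_j(x)\Big),
\]
where $J^*(x)$ is the active set defined just before the lemma. Hence any $\nabla h(x,\xi)\in\partial h(x,\xi)$ admits a representation $\nabla h(x,\xi)=\sum_{j\in J^*(x)}\lambda_j\,\xi^j\,\nabla h_j(x)$ with $\lambda_j\ge 0$, $\sum_{j\in J^*(x)}\lambda_j=1$, and each $\nabla h_j(x)\in\partial h_j(x)$. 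The nonnegativity $\xi^j\ge 0$ is exactly what lets me pull the scalar through the subdifferential, writing $\partial(\xi^j h_j)(x)=\xi^j\partial h_j(x)$ without any sign issues.

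Next I would bound the norm using convexity of $\|\cdot\|$ together with the triangle inequality:
\[
\|\nabla h(x,\xi)\|=\Big\|\sum_{j\in J^*(x)}\lambda_j\,\xi^j\,\nabla h_j(x)\Big\|\le \sum_{j\in J^*(x)}\lambda_j\,\xi^j\,\|\nabla h_j(x)\|.
\]
Applying Assumption \ref{assumption3}, namely $\|\nabla h_j(x)\|\le B_j$ on $\mathcal{Y}$, together with the uniform upper bound $\xi^j\le\bar{\xi}$, gives $\xi^j\|\nabla h_j(x)\|\le\bar{\xi}B_j\le\bar{\xi}\max_{k=1:m}B_k$ for every active index $j$. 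Since the coefficients $\lambda_j$ form a convex combination, the weighted sum collapses and I obtain $\|\nabla h(x,\xi)\|\le\bar{\xi}\max_{j=1:m}B_j=\mathcal{B}_h$, which is precisely the claimed bound, holding uniformly over $x\in\mathcal{Y}$ and $\xi\in\mathcal{F}_2$.

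The only delicate point is the first step: rigorously justifying the max-rule for the subdifferential. This requires each $\xi^j h_j$ to be proper convex, which is guaranteed since $\xi^j\ge 0$ and the $h_j$ are convex, and for the exact convex-hull identity it uses a mild continuity/regularity condition at $x$, which is available because the effective domains of the $h_j$ contain the interior of $\mathcal{Y}$. I emphasize, however, that for the purpose of this bound only the \emph{inclusion} above is needed: it already guarantees that every element of $\partial h(x,\xi)$ is a convex combination of the scaled active subgradients, and the estimate then follows directly. Thus the nonnegativity $\xi^j\ge 0$ secures the subdifferential representation, while the boundedness $\xi^j\le\bar{\xi}$ makes the resulting bound finite and independent of the particular realization of $\xi$.
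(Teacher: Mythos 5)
Your proposal is correct and follows essentially the same route as the paper: the paper likewise invokes the max-rule for the subdifferential of a finite maximum (citing Lemma 3.1.13 in \cite{Nes:18}), represents any subgradient as a convex combination of the scaled active subgradients $\xi^{j^*}\nabla h_{j^*}(x)$, and then applies the triangle inequality together with $\xi^{j}\le\bar{\xi}$ and Assumption \ref{assumption3} to obtain $\mathcal{B}_h=\bar{\xi}\max_{j=1:m}B_j$. Your added remarks on why $\xi^j\ge 0$ justifies pulling the scalar through the subdifferential, and on only needing the inclusion rather than the exact convex-hull identity, are sound and if anything slightly more careful than the paper's presentation.
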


\begin{proof}
	Let $x \in \blue{\mathcal{Y}}$ and $\nabla h(x, \xi) \in \partial h(x, \xi)$. Then, from the definition of $h(\cdot, \xi)$ and of the index set $ J^*(x)$, we have:
	\[ h(x,\xi) =  \max_{j=1:m} (\xi^j h_j(x)) = \xi^{j^*}\cdot h_{j^*}(x) \quad \forall j^* \in J^*(x). \]
	Then, we further have (see Lemma 3.1.13 in \cite{Nes:18}):
	\begin{align}\label{eq:subgradh}  
	    &\nabla h(x,\xi) = \text{Conv} \{ \xi_{j^*} \nabla h_{j^*} (x) |j^* \in J^*(x) \} \nonumber\\
		 \implies & \|\nabla h(x, \xi)\| \le \max_{\theta_{j^*}\ge 0,\; \sum_{j^*\in J^*(x)} \theta_{j^*}=1 } \left\|\sum_{j^*\in J^*(x)} \theta_{j^*}\xi^{j^*} \cdot \nabla h_{j^*}(x) \right\| \nonumber \\
		& \le \max_{\theta_{j^*}\ge 0,\; \sum_{j^*\in J^*(x)} \theta_{j^*}=1 } \sum_{j^*\in J^*(x)} \theta_{j^*}\xi^{j^*}\cdot \|\nabla h_{j^*}(x)\| \nonumber \\ 
		& \le \max_{\theta_{j^*}\ge 0,\; \sum_{j^*\in J^*} \theta_{j^*}=1 } \sum_{j^*\in J^*(x)} \theta_{j^*} \bar{\xi}\cdot \|\nabla h_{j^*}(x)\| \nonumber \\
		& = \bar{\xi} \max_{j^* \in J^*(x)} \|\nabla h_{j^*}(x) \| \overset{\eqref{ass:3}}{\le} \bar{\xi} \max_{ j=1:m} B_j =\mathcal{B}_h,
	\end{align} 
which proves our statement.	
\end{proof}

\noindent In the next lemma we provide a H\"{o}lderian growth type condition for the functional constraints of the stochastic problem \eqref{eq:stochasticProb}.
\begin{lemma}\label{linearreg_h}
	Let Assumption \ref{assumption4} hold and consider the random vector $\xi$ satisfying $\xi \ge 0$ and $\mathbb{E}[\xi^j] > 0$ for all  $j=1:m$. Then, the functional constraints of the problem \eqref{eq:stochasticProb} satisfy the following H\"{o}lderian growth type condition:
	\begin{equation}\label{qreg}
		\emph{dist}^{2q}(y, \mathcal{X}) \le c \cdot  \mathbb{E} \left[ (h(y,\xi))_+^2 \right] \;\; \forall y \in \blue{\mathcal{Y}}, 
	\end{equation}
with the parameter  $c = \left(\frac{\bar{c}}{\min_{j = 1:m} \mathbb{E}[\xi^j]} \right)$.
\end{lemma}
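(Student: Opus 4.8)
The plan is to reduce the stochastic bound \eqref{qreg} to the deterministic H\"{o}lderian growth bound \eqref{eq:constrainterrbound} of Assumption \ref{assumption4}. Since that assumption already supplies $\mathrm{dist}^{2q}(y,\mathcal{X}) \le \bar{c}\,(\max_{j=1:m} h_j(y))_+^2$, it suffices to lower bound the stochastic feasibility violation by the deterministic one, i.e. to prove an inequality of the form
\[
\left(\max_{j=1:m} h_j(y)\right)_+^2 \le \kappa \cdot \mathbb{E}\left[(h(y,\xi))_+^2\right] \qquad \forall y \in \mathcal{Y},
\]
and then to set $c = \bar{c}\,\kappa$. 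So the whole content is to identify $\kappa$ as a function of the $\mathbb{E}[\xi^j]$'s.

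First I would split on the sign of $\max_{j} h_j(y)$. If $\max_{j} h_j(y) \le 0$ then, since $y \in \mathcal{Y}$, we have $y \in \mathcal{X}$, hence $\mathrm{dist}(y,\mathcal{X}) = 0$ and \eqref{qreg} holds trivially because its right-hand side is nonnegative. The substantive case is $\max_{j} h_j(y) > 0$. Here I pick an active index $j_0 \in \arg\max_{j=1:m} h_j(y)$, so $h_{j_0}(y) = \max_j h_j(y) > 0$. Using the definition $h(y,\xi) = \max_{j}(\xi^j h_j(y))$ together with $\xi \ge 0$ gives the pointwise (in $\xi$) bound $h(y,\xi) \ge \xi^{j_0} h_{j_0}(y) \ge 0$. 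Two routes then close the argument: the direct route squares this and takes expectations, $\mathbb{E}[(h(y,\xi))_+^2] \ge \mathbb{E}[(\xi^{j_0})^2]\,(h_{j_0}(y))^2$; the alternative applies Jensen's inequality to the convex map $t \mapsto (t)_+^2$ to get $\mathbb{E}[(h(y,\xi))_+^2] \ge \big((\mathbb{E}[h(y,\xi)])_+\big)^2$ and then $\mathbb{E}[h(y,\xi)] \ge \mathbb{E}[\xi^{j_0}]\,h_{j_0}(y) \ge (\min_{j=1:m}\mathbb{E}[\xi^j])\,h_{j_0}(y) > 0$. Either way one invokes $\mathbb{E}[\xi^j] > 0$ and the choice of $j_0$ to finish.

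The main obstacle is getting the exact power of $\min_{j=1:m}\mathbb{E}[\xi^j]$ right, and I would track this very carefully rather than collapse the square prematurely. Both natural routes push a square through the expectation: the direct route gives $\mathbb{E}[(\xi^{j_0})^2] \ge (\mathbb{E}[\xi^{j_0}])^2 \ge (\min_{j}\mathbb{E}[\xi^j])^2$ by Jensen, and the Jensen route produces $(\min_{j}\mathbb{E}[\xi^j])^2$ directly, so both point to $\kappa = (\min_{j}\mathbb{E}[\xi^j])^{-2}$. A one-line sanity check with a single constraint ($m=1$) and a constant weight $\xi^1 \equiv a$ confirms this scaling: there $\mathbb{E}[(h(y,\xi))_+^2] = a^2 (h_1(y))^2$, so a constant proportional to $(\mathbb{E}[\xi^1])^{-1}=a^{-1}$ would only succeed when $a \ge 1$, whereas $(\mathbb{E}[\xi^1])^{-2}$ always works. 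I therefore expect the delicate point---and plausibly the substance of the announced correction to the journal version---to be precisely whether the weight enters as $\min_{j}\mathbb{E}[\xi^j]$ or as its square.
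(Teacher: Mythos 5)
Your argument is correct, and it follows essentially the same route as the paper: lower-bound $\mathbb{E}[(h(y,\xi))_+^2]$ by a multiple of $\bigl(\max_{j}h_j(y)\bigr)_+^2$ and then invoke Assumption \ref{assumption4}. The difference is in the constant, and your careful bookkeeping is the valuable part. Both of your routes yield
\[
\mathbb{E}\left[(h(y,\xi))_+^2\right] \ \ge\ \Bigl(\min_{j=1:m}\mathbb{E}[\xi^j]\Bigr)^{2}\,\Bigl(\max_{j=1:m}h_j(y)\Bigr)_+^2,
\]
hence $c=\bar{c}\,/\bigl(\min_{j}\mathbb{E}[\xi^j]\bigr)^{2}$, whereas the paper's proof passes from $\bigl(\max_{j}\mathbb{E}[\xi^j]h_j(y)\bigr)_+^2$ to $\bigl(\min_{j}\mathbb{E}[\xi^j]\bigr)\bigl(\max_{j}h_j(y)\bigr)_+^2$ with the minimum raised only to the \emph{first} power, and so states $c=\bar{c}/\min_{j}\mathbb{E}[\xi^j]$. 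That intermediate step is false whenever $\min_{j}\mathbb{E}[\xi^j]<1$ (take $m=1$, $\mathbb{E}[\xi^1]=1/2$, $h_1(y)=1$: it would assert $1/4\ge 1/2$), and $\min_{j}\mathbb{E}[\xi^j]=\min_{j}p_j\le 1$ is exactly the regime of the sampling vectors \eqref{samplingVector2} used later, as your $m=1$ sanity check with a constant weight $a<1$ makes explicit. So your suspicion about where the delicate point lies is borne out: the lemma holds with the squared constant, and the correction would propagate to Theorem \ref{sampling_cases} (where $c=\bar{c}m/\tau_2$ should become $\bar{c}(m/\tau_2)^2$) and to every subsequent rate that depends on $c$ through $C_{\beta,c,\mathcal{B}_h}$.
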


\begin{proof}
	Let $y \in \blue{\mathcal{Y}}$, using the definition of $h(\cdot, \xi)$ and Jensen's inequality, we have: 
	\begin{align*}
		\mathbb{E} \left[ (h(y,\xi))_+^2 \right] & = \mathbb{E} \left[ \left(\max_{j = 1:m}(\xi^j h_j (y))\right)_+^2 \right]  \ge \left( \max_{j = 1:m}( \mathbb{E} [\xi^j] h_j (y))  \right)_+^2 \\
		& \overset{\mathbb{E}[\xi^j] > 0}{\ge} \min_{j = 1:m} \mathbb{E}[\xi^j] \left( \max_{j = 1:m}( h_j (y)) \right)_+^2 \overset{\eqref{eq:constrainterrbound}}{\ge} \left(\min_{j = 1:m} \mathbb{E}[\xi^j]\right) \frac{1}{\bar{c}} \text{dist}^{2q}(y, \mathcal{X}).
	\end{align*}
	Thus, we have:
	\begin{align}\label{eq:linearreg}
		\text{dist}^{2q}(y, \mathcal{X}) & \le \frac{\bar{c}}{\min\limits_{j = 1:m} \mathbb{E}[\xi^j]} \mathbb{E} \left[ (h(y,\xi))_+^2 \right],
	\end{align}
which proves our statement.	
\end{proof}


\subsection{Choices for random vectors $\zeta$ and $\xi$}

In this section, we provide several choices for the two random vectors $\zeta$ and $\xi$. Let $\mathcal{I} \subseteq [1:N]$ and let $e_\mathcal{I} = \sum_{i \in \mathcal{I}} e_i$, where $\{e_1, ...,e_N\}$ is the standard basis of $\mathbb{R}^N$. These subsets will be selected using a random set valued map, i.e. sampling $S$. A sampling $S$ is uniquely characterized by choosing the probabilities $p_\mathcal{I} \geq 0$ for all subsets $\mathcal{I}$:
\[ \mathbb{P} [S = \mathcal{I}] = p_\mathcal{I} \;\;\forall \mathcal{I}\subset [1:N],  \]
such that $\sum_{\mathcal{I}\subset [1:N]} p_\mathcal{I} = 1$. A sampling $S$ is called proper if $p_i = \mathbb{P} [i \in S] = \mathbb{E} [\mathbf{1}_{i\in S}]= \sum_{\mathcal{I}:i \in \mathcal{I}} p_\mathcal{I}$ is positive for all $i=1:N$, see also \cite{RobRic:19, RicTak:16}. We now define some practical sampling vectors $\zeta = \zeta(S)$. For example, let $S$ be a proper sampling and let $\hat{\mathbb{P}} = \text{Diag}(p_1,...,p_N)$. Then, we can consider the sampling vector as:
\begin{equation} \label{samplingVector1}
	\zeta = \hat{\mathbb{P}}^{-1}e_S \implies \zeta^i = \frac{\mathbf{1}_{i \in S}}{p_i}.
\end{equation}

\noindent Note that $\mathbb{E} [\zeta^i] = \frac{\mathbb{E} [\mathbf{1}_{i \in S}]}{p_i} = 1$ and since $\zeta^T \zeta = \sum_{i=1}^{N} (\zeta^i)^2 = \sum_{i=1}^{N} \mathbf{1}_{i \in S}/ p_i^2$, then  $\mathbb{E} [\|\zeta\|^2] = \sum_{i=1}^{N} 1/p_i$. For constraints, if we let $\mathcal{I}' \subseteq [1:m]$ and define $e_{\mathcal{I}'} = \sum_{j \in \mathcal{I}'} e_j$, then a sampling $S'$ is uniquely characterized by choosing probabilities $p_{\mathcal{I}'} \geq 0$ for all subsets $\mathcal{I}'$ of $[1:m]$. Let $S'$ be a proper sampling vector, then we can define the practical sampling vector $\xi = \xi(S')$ as:
\begin{equation} \label{samplingVector2}
	\xi =  e_{S'} \implies \xi^j = \mathbf{1}_{j \in S'}.
\end{equation}
Note that $\mathbb{E} [\xi^j] = \mathbb{E} [\mathbf{1}_{j \in S'}] = p_j > 0$ and $ \xi^j \leq \bar{\xi} = 1 $. Furthermore, each sampling $S$ and $S'$ give rise to a particular sampling vector $\zeta = \zeta(S)$ and $\xi = \xi (S')$. Below we provide some sampling examples.\\

\noindent\textit{Partition sampling:} A partition $\mathcal{P}$ of $[1:N]$ is a set consisting of subsets of $[1:N]$ such that $\cup_{\mathcal{I}\in \mathcal{P}} \mathcal{I} =[1:N]$ and $\mathcal{I}_i \cap \mathcal{I}_l = \phi$ for any $\mathcal{I}_i, \mathcal{I}_l \in \mathcal{P}$ with $i \ne l$. A partition sampling $S$ is a sampling such that $p_\mathcal{I} = \mathbb{P}[S=\mathcal{I}] > 0$ for all $\mathcal{I} \in \mathcal{P}$ and $\sum_{\mathcal{I} \in \mathcal{P}} p_\mathcal{I} = 1$. \\

\noindent\textit{$\tau$-nice sampling:} We say that $S$ is $\tau$–nice if $S$ samples from all subsets of $[1:N]$ of cardinality $\tau$ uniformly at random. In this case we have that $p_i = \frac{\tau}{N} \; \text{for all}\; i = 1:N$. Then, $p_\mathcal{I} = \mathbb{P}\left[S =\mathcal{I} \right] = 1/{N \choose \tau}$ for all subsets $\mathcal{I}\subset\{1,...,N\}$ with $\tau$ elements.\\

\noindent The reader can also consider  other examples for sampling, see e.g., \cite{RobRic:19} for more details. 
Let the cardinality of samples $S$ and $S'$ be $\tau_1$ and $\tau_2$, respectively. In the next theorem, using Lemmas \ref{bddsubgrad_F}, \ref{bddsubgrad_h} and \ref{linearreg_h}, we derive explicit expressions. which depend on the mini-batch sizes $\tau_1$ and $\tau_2$, for the assumptions' constants  $\mathcal{B}, \mathcal{L}, \mathcal{B}_h$ and $c$ for the two sampling  given previously.

\begin{theorem} \label{sampling_cases}
	Let Assumption \ref{assumption1}, \ref{assumption3} and \ref{assumption4} hold. Let also $S$ and $S'$ be sampled uniform at random with partition sampling  having the same cardinality $\tau_1$ and $\tau_2$, or alternatively with $\tau_1$- and $\tau_2$-nice sampling. Then, the constants $\mathcal{B}, \mathcal{L}, \mathcal{B}_h$ and $c$ are:
	\[\mathcal{B}^2 = \frac{N}{\tau_1}B^2,\; \mathcal{L} = \frac{N}{\tau_1}L,\; \mathcal{B}_h = \max_{j=1:m} B_j\;  \text{and}\; c = \left(\frac{\bar{c}m}{\tau_2} \right).\]
\end{theorem}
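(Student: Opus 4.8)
The plan is to reduce the entire statement to substituting the specific sampling probabilities of the two schemes into the constant formulas already established in Lemmas \ref{bddsubgrad_F}, \ref{bddsubgrad_h} and \ref{linearreg_h}. Those lemmas express the four constants in terms of only three probabilistic quantities, namely $\mathbb{E}[\|\zeta\|^2]$, the upper bound $\bar{\xi}$, and $\min_{j=1:m}\mathbb{E}[\xi^j]$. So the whole argument amounts to evaluating these three quantities for the sampling vectors \eqref{samplingVector1} and \eqref{samplingVector2}, which in turn requires only the marginal inclusion probabilities $p_i = \mathbb{P}[i\in S]$ and $p_j = \mathbb{P}[j\in S']$.

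First I would handle the objective-related constants. Lemma \ref{bddsubgrad_F} gives $\mathcal{B}^2 = \tfrac{\mathbb{E}[\|\zeta\|^2]}{N}B^2$ and $\mathcal{L} = \tfrac{\mathbb{E}[\|\zeta\|^2]}{N}L$, and the discussion following \eqref{samplingVector1} already yields $\mathbb{E}[\|\zeta\|^2] = \sum_{i=1}^N 1/p_i$; hence I only need the marginals $p_i$. For $\tau_1$-nice sampling a combinatorial count gives $p_i = \binom{N-1}{\tau_1-1}/\binom{N}{\tau_1} = \tau_1/N$ for every $i$. For partition sampling into blocks of equal size $\tau_1$ there are $N/\tau_1$ blocks, each selected with probability $\tau_1/N$, and since every index lies in exactly one block we again get $p_i = \tau_1/N$. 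In both cases $\sum_{i=1}^N 1/p_i = N\cdot(N/\tau_1) = N^2/\tau_1$, which upon substitution produces $\mathcal{B}^2 = \tfrac{N}{\tau_1}B^2$ and $\mathcal{L} = \tfrac{N}{\tau_1}L$. Next, for the constraint constants I would use that the sampling vector \eqref{samplingVector2} is $\xi^j = \mathbf{1}_{j\in S'}$, so $\xi^j \le \bar{\xi} = 1$ and $\mathbb{E}[\xi^j] = p_j$. Plugging $\bar{\xi}=1$ into Lemma \ref{bddsubgrad_h} gives $\mathcal{B}_h = \max_{j=1:m} B_j$ independently of the scheme, while the same marginal computation as above, now on $[1:m]$ with batch size $\tau_2$, gives $p_j = \tau_2/m$ for both schemes; inserting $\min_j \mathbb{E}[\xi^j] = \tau_2/m$ into Lemma \ref{linearreg_h} yields $c = \bar{c}m/\tau_2$.

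I do not expect any genuine obstacle here, since once the marginals are in hand the rest is direct substitution. The only point requiring a little care is the combinatorial evaluation of $p_i$ for nice sampling (the ratio of binomial coefficients collapsing to $\tau/N$) together with the observation that equal block cardinalities are exactly what forces a common uniform marginal for partition sampling; implicitly this also uses the divisibility assumptions $\tau_1\mid N$ and $\tau_2\mid m$ so that equal-size partitions exist. Beyond this bookkeeping the proof is essentially a verification that both named schemes realize the same marginals and therefore the same specialized constants.
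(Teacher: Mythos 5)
Your proposal is correct and follows essentially the same route as the paper: both reduce the theorem to substituting the marginal inclusion probabilities $p_i=\tau_1/N$ and $p_j=\tau_2/m$ (identical for partition and nice sampling) into the constants of Lemmas \ref{bddsubgrad_F}, \ref{bddsubgrad_h} and \ref{linearreg_h}, using $\mathbb{E}[\|\zeta\|^2]=\sum_{i=1}^N 1/p_i$ and $\bar{\xi}=1$. Your explicit remark about the implicit divisibility conditions $\tau_1\mid N$, $\tau_2\mid m$ for the partition case is a small point of care the paper leaves unstated.
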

\begin{proof}
From Lemma \ref{bddsubgrad_F}, for the parameters $\mathcal{B}\;\text{and}\; \mathcal{L}$, we have:
    \begin{align}\label{eq:common2}
        & \mathcal{B}^2 = \frac{\mathbb{E}[\|\zeta\|^2]}{N} B^2  \overset{\eqref{samplingVector1}}{=} \frac{1}{N}\sum_{S} p_S\sum_{i\in S} \frac{1}{p_i^2} B^2,\\ 
        & \mathcal{L} = \frac{\mathbb{E}[\|\zeta\|^2 ]}{N} L  \overset{\eqref{samplingVector1}}{=} \frac{1}{N}\sum_{S} p_S\sum_{i\in S} \frac{1}{p_i^2} L.\nonumber
    \end{align}
    For partition sampling given the realization $S = \mathcal{I}$, we have $p_{i} = p_\mathcal{I}$ if $i \in \mathcal{I}$. Since the cardinality of each $\mathcal{I}$ is $\tau_1$ and the sampling $S \in \{ \mathcal{I}_1,..., \mathcal{I}_{\ell} \}$ is chosen uniform at random, then $p_i = p_\mathcal{I} = \frac{1}{\ell} = \frac{\tau_1}{N}$. Thus, using \eqref{eq:common2} we have:
    \begin{align*}
        \mathcal{B}^2 = \frac{1}{N}\sum_{\mathcal{I} \in \mathcal{P}} p_\mathcal{I} \sum_{i\in \mathcal{I}} \frac{1}{p_i^2} B^2 = \frac{1}{N}\sum_{\mathcal{I} \in \mathcal{P}} \frac{\tau_1}{N} \tau_1 \frac{N^2}{\tau_1^2} B^2 = \sum_{\mathcal{I} \in \mathcal{P}} B^2 = \ell B^2 =  \frac{N}{\tau_1} B^2. 
    \end{align*}
    Similarly, we can prove that $\mathcal{L} = \frac{N}{\tau_1} L$. 
    For $\tau_1$-nice sampling given the realization $S = \mathcal{I}$, we have, $p_i = \frac{\tau_1}{N}$ for all $i$ and $p_\mathcal{I} = 1/{N \choose \tau_1}$. Using \eqref{eq:common2}, we get:
    \[ \mathcal{B}^2 = \frac{B^2}{N}\sum_{\mathcal{I}} p_\mathcal{I} \sum_{i\in \mathcal{I}} \frac{1}{p_i^2} = \frac{B^2}{N}\sum_{\mathcal{I}} \frac{1}{{N \choose \tau_1}} \tau_1 \frac{N^2}{\tau_1^2} = \frac{N}{\tau_1} B^2 \sum_{\mathcal{I}}\frac{1}{{N \choose \tau_1}} =  \frac{N}{\tau_1} B^2.\]
    Similarly, we can get the value for other parameter, i.e., $\mathcal{L} = \frac{N}{\tau_1} L$. 
    By Lemma \ref{bddsubgrad_h}, for the parameter $\mathcal{B}_h$, we  have:
    \[ \mathcal{B}_h = \bar{\xi} \max_{ j=1:m} B_j. \]
    Using the definition of $\xi^j$ from \eqref{samplingVector2}, i.e., $ \xi^j \leq \bar{\xi} = 1 $, we get:
	\[ \mathcal{B}_h = \max_{ j=1:m} B_j. \]
	Note that this bound holds for both types of sampling. Finally, from  Lemma \ref{linearreg_h}, for the parameter $c$, we have:
	\[ c = \frac{\bar{c}}{\min_{j = 1:m} \mathbb{E}[\xi^j]} \overset{\eqref{samplingVector2}}{=} \frac{\bar{c}}{\min_{j =1:m} p_{j}} .\]
	Here we use the fact that $ \mathbb{E}[\xi^j] = \mathbb{E} [\mathbf{1}_{j \in S'}] = p_j $.
	Now for the given realization $S'=\mathcal{I}'$, we have $p_j = p_{\mathcal{I}'} = \frac{\tau_2}{m}$ for partition sampling and $p_j = \frac{\tau_2}{m}$ for $\tau_2$-nice sampling, respectively. Therefore,  $c = \left(\frac{\bar{c}m }{\tau_2} \right)$. These prove our statements. 
\end{proof}

\section{Mini-batch stochastic subgradient projection algorithm }

For solving the  stochastic reformulation \eqref{eq:stochasticProb} of the optimization problem \eqref{eq:prob} we adapt the stochastic subgradient projection  method  from \cite{NecSin:21}. We refer to this algorithm as the Mini-batch Stochastic Subgradient Projection  method (\texttt{Mini-batch SSP}). 
\begin{center}
	\noindent\fbox{%
		\parbox{12cm}{%
			\textbf{Algorithm 1 (\texttt{Mini-batch SSP})}:\\
			$\text{Choose} \; x_0 \in \mathcal{Y} \; \text{and stepsizes} \; \alpha_k>0, \; \beta \in (0, 2).$\\
			$\text{For} \; k \geq 0 \;  \text{repeat:}$
			\vspace{-0.4cm}
			\begin{align}
			    & \text{Draw} \; \text{sample vectors} \; \zeta_k \sim \mathbb{P}_1 \;\text{and}\; \xi_k\sim \mathbb{P}_2 \;  \text{independently}.\\
				&\blue{u_k} = \text{prox}_{\alpha_k g(\cdot,\zeta_{k})} \left(x_{k} - \alpha_k \nabla f(x_{k}, \zeta_{k} )\right), \quad v_k = \blue{\Pi_{\mathcal{Y}} (u_k)} \label{eq:algstep1}\\
				&\text{Compute}\; h(v_k, \xi_k) = \max(\xi^1_{k}h_1(v_k),..., \xi^m_{k}h_m(v_k) ) \nonumber \\
				&z_k = v_k  - \beta \frac{(h(v_k,\xi_k))_+}{\| \nabla h(v_k,\xi_k) \|^2} \nabla h(v_k,\xi_k) \label{eq:algstep2}\\
				&x_{k+1} = \Pi_{\mathcal{Y}} (z_k). \nonumber
			\end{align}
		}%
	}
\end{center}

\noindent Using the sampling paradigm in Section 3, the   \texttt{Mini-batch SSP} algorithm  can incorporate a diverse array of mini-batch variants, each of which is associated with a specific probability law governing the data selection rule used to form mini-batches. Most of our variants of \texttt{Mini-batch SSP}, with different mini-batch sizes for the objective function and functional constraints, were never explicitly considered in the literature before, e.g., the variants corresponding to  partition and nice samplings.  Note that at each iteration our algorithm takes a mini-batch stochastic proximal subgradient step aimed at  minimizing the objective function (see \eqref{eq:algstep1}) and then a subsequent mini-batch subgradient projection step  minimizing the feasibility violation (see \eqref{eq:algstep2}). More precisely, if the random vector $\zeta_k$ has  $\zeta_k^i =1$ for all $i \in \mathcal{I}_k$ and $\zeta_k^i =0$ for all $i \in \{1,\cdots, N\} \setminus \mathcal{I}_k$, then step \eqref{eq:algstep1} is a mini-batch proximal subgradient iteration \blue{and followed by a projection step onto the set $\mathcal{Y}$}:
\[  \blue{u_k} = \text{prox}_{\alpha_k \sum_{i \in \mathcal{I}_k} g_i} \left(x_{k} - \alpha_k  \sum_{i \in \mathcal{I}_k} \nabla f_i(x_{k}) \right), \quad v_k  = \blue{\Pi_{\mathcal{Y}} (u_k)}. \]
Similarly, if the random vector $\xi_k$ has  $\xi_k^i =1$ for all $i \in \mathcal{I}_k'$ and $\xi_k^i =0$ for all $i \in \{1,\cdots, m\} \setminus \mathcal{I}_k'$, then step \eqref{eq:algstep2} minimizes the feasibility violation of the observed mini-batch of constraints, i.e., we choose from the mini-batch the constraint that is violated the most,   $h(v_k, \xi_k) = \max_{j \in \mathcal{I}_k'} h_j(v_k) = h_{j_k^*}(v_k)$ for some index $j_k^* \in \mathcal{I}_k'$, and then perform a Polyak's subgradient like update on it \cite{Pol:67}:
\[  z_k = v_k  - \beta \frac{(h(v_k,\xi_k))_+}{\| \nabla h(v_k,\xi_k) \|^2} \nabla h(v_k,\xi_k) = v_k  - \beta \frac{(h_{j_k^*}(v_k))_+}{\| \nabla h_{j_k^*}(v_k) \|^2} \nabla h_{j_k^*}(v_k). \]
Consider any arbitrary nonzero $s_h\in \mathbb{R}^n$. Disregarding the abuse of notation, we compute the vector $\nabla h(v_k, \xi_k) = \nabla h_{j_k^*}(v_k)$ by:
\[
 \nabla h_{j_k^*}(v_k)  = 
\begin{cases} 
	 \nabla h_{j_k^*}(v_k) \in \partial h_{j_k^*}(v_k)  & \mbox{if } \;  h_{j_k^*}(v_k) > 0  \\ 
	s_h  \neq 0 & \mbox{if } \;  h_{j_k^*}(v_k) \leq  0. 
\end{cases}
\] 
When $(h(v_k,\xi_k))_+ = (h_{j_k^*}(v_k))_+ = 0$, we have $z_k = v_k$ for any choice of $s_h \neq 0$.  Note that in the \texttt{Mini-batch SPP}  algorithm  $\alpha_{k}>0$ and $\beta>0$ are deterministic stepsizes. Moreover, when $\beta = 1$, $z_k$ is the projection of  $v_k$ onto the  hyperplane given by the functional constraint that is violated the most in  the observed mini-batch of constraints given by the index set $\mathcal{I}_k'$: 
$$\mathcal{H}_{v_k,\xi_k} =\{ z:   h(v_k,\xi_k) + \nabla h(v_k,\xi_k)^T(z - v_k) \!\leq\! 0  \} \!=\! \{ z:   h_{j_k^*}(v_k) + \nabla h_{j_k^*}(v_k)^T(z - v_k) \!\leq\! 0  \}, $$ that is,  we have  $z_k = \Pi_{\mathcal{H}_{v_k,\xi_k}} (v_k)$ when we choose $\beta = 1$. In the next sections we analyse the convergence behaviour of \texttt{Mini-batch SSP} algorithm  and derive rates depending explicitly on the mini-batch sizes and on the properties of the objective function.


\subsection{Convergence analysis: convex objective function}
\noindent In this section we consider that the functions $f_i, g_i$ and  $h_j$ in problem \eqref{eq:prob} are convex and the random vectors $\zeta$ and $\xi$ are non-negative.   Let us define the filtration as the sigma algebra generated by the history of the random vectors $\zeta$ and $\xi$: 
\[  \mathcal{F}_{[k]}= \sigma(\{ \zeta_t, \xi_t: \;   0 \le t \le k \}).   \]
The next lemma, whose proof is similar to Lemma 5 in \cite{NecSin:21} provides a key  descent property for  the sequence $v_k$ (recall that $\bar{v}_{k} = \Pi_{\mathcal{X}^*} (v_k)$ and $\bar{x}_{k} = \Pi_{\mathcal{X}^*} (x_k)$).

\begin{lemma}
	\label{th:spg_basic} 
	Let  $f_i$ and $g_i$, with $i = 1:N$, be  convex functions and $\zeta\ge 0$.  Additionally, let the bounded gradient condition from Assumption \ref{assumption1} hold. Then, for any $k \geq 0$ and  stepsize $\alpha_k >0$, we have the following recursion:
	\begin{align}
		\label{spg_basic} 
		&\mathbb{E}[\|v_{k} - \bar{v}_{k} \|^2] 
		\leq \mathbb{E}[\| x_k - \bar{x}_{k} \|^2] -  \alpha_k  (2 - \alpha_k \mathcal{L}) \, \mathbb{E}[ F(x_k) - F(\bar{x}_{k}) ] + \alpha_k^2 \mathcal{B}^2,   
	\end{align}
	with $\mathcal{B}$ and $\mathcal{L}$  given in Lemma \ref{bddsubgrad_F}.
\end{lemma}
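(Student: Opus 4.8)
The plan is to bound the squared distance of $v_k$ to the optimal set by the squared distance of the pre-projection point $u_k$ to a fixed optimal point, and then unfold the proximal step \eqref{eq:algstep1}. First I would observe that since $\bar v_k = \Pi_{\mathcal{X}^*}(v_k)$ is the closest optimal point to $v_k$ while $\bar x_k = \Pi_{\mathcal{X}^*}(x_k)\in\mathcal{X}^*$, we have $\|v_k-\bar v_k\|^2\le\|v_k-\bar x_k\|^2$ for every realization. Next, because $v_k=\Pi_{\mathcal{Y}}(u_k)$, $\bar x_k\in\mathcal{X}^*\subseteq\mathcal{Y}$ (so $\Pi_{\mathcal{Y}}(\bar x_k)=\bar x_k$), and projection onto the convex set $\mathcal{Y}$ is nonexpansive, it follows that $\|v_k-\bar x_k\|^2\le\|u_k-\bar x_k\|^2$. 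Thus it suffices to prove the per-sample inequality
\[
\|u_k-\bar x_k\|^2 \le \|x_k-\bar x_k\|^2 - 2\alpha_k\big(F(x_k,\zeta_k)-F(\bar x_k,\zeta_k)\big) + \alpha_k^2\|\nabla F(x_k,\zeta_k)\|^2,
\]
with $\nabla F(x_k,\zeta_k)=\nabla f(x_k,\zeta_k)+\nabla g(x_k,\zeta_k)$, and then pass to expectations.

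To establish this inequality I would unfold the prox. The map $u\mapsto g(u,\zeta_k)+\tfrac{1}{2\alpha_k}\|u-y_k\|^2$ with $y_k=x_k-\alpha_k\nabla f(x_k,\zeta_k)$ is $\tfrac{1}{\alpha_k}$-strongly convex and minimized at $u_k$, so evaluating it at $\bar x_k$ yields $g(\bar x_k,\zeta_k)+\tfrac{1}{2\alpha_k}\|\bar x_k-y_k\|^2 \ge g(u_k,\zeta_k)+\tfrac{1}{2\alpha_k}\|u_k-y_k\|^2+\tfrac{1}{2\alpha_k}\|\bar x_k-u_k\|^2$. Expanding the two $y_k$-squared norms and multiplying by $2\alpha_k$ isolates $\|\bar x_k-u_k\|^2$ in terms of $\|\bar x_k-x_k\|^2-\|u_k-x_k\|^2$, a cross term $2\alpha_k\langle\bar x_k-u_k,\nabla f(x_k,\zeta_k)\rangle$, and $2\alpha_k(g(\bar x_k,\zeta_k)-g(u_k,\zeta_k))$. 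I would then linearize $f(\cdot,\zeta_k)$ at $x_k$ to bound $\langle\bar x_k-x_k,\nabla f(x_k,\zeta_k)\rangle\le f(\bar x_k,\zeta_k)-f(x_k,\zeta_k)$.

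The delicate point, and the step I expect to be the main obstacle, is that the proximal optimality condition naturally produces a subgradient of $g$ evaluated at $u_k$, whereas Assumption \ref{assumption1} and Lemma \ref{bddsubgrad_F} control $\nabla f+\nabla g$ taken at the \emph{same} point. I would resolve this by never invoking the subgradient at $u_k$: instead I split $g(\bar x_k,\zeta_k)-g(u_k,\zeta_k)=(g(\bar x_k,\zeta_k)-g(x_k,\zeta_k))+(g(x_k,\zeta_k)-g(u_k,\zeta_k))$ and bound the last difference by convexity with a subgradient at $x_k$, i.e. $g(x_k,\zeta_k)-g(u_k,\zeta_k)\le\langle\nabla g(x_k,\zeta_k),x_k-u_k\rangle$. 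Combined with the residual cross term $2\alpha_k\langle x_k-u_k,\nabla f(x_k,\zeta_k)\rangle$ this assembles exactly into $2\alpha_k\langle x_k-u_k,\nabla F(x_k,\zeta_k)\rangle$, which by Young's inequality is at most $\|x_k-u_k\|^2+\alpha_k^2\|\nabla F(x_k,\zeta_k)\|^2$. The $\|x_k-u_k\|^2$ term then cancels the earlier $-\|u_k-x_k\|^2$, while the function-value pieces collect into $F(\bar x_k,\zeta_k)-F(x_k,\zeta_k)$, giving the per-sample inequality.

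Finally I would take the conditional expectation given $\mathcal{F}_{[k-1]}$, under which $x_k$ and $\bar x_k$ are fixed and $\zeta_k$ is fresh, so that $\mathbb{E}[F(x_k,\zeta_k)\mid\mathcal{F}_{[k-1]}]=F(x_k)$, $\mathbb{E}[F(\bar x_k,\zeta_k)\mid\mathcal{F}_{[k-1]}]=F(\bar x_k)=F^*$, and, by Lemma \ref{bddsubgrad_F} applied at $x_k\in\mathcal{Y}$, $\mathbb{E}[\|\nabla F(x_k,\zeta_k)\|^2\mid\mathcal{F}_{[k-1]}]\le\mathcal{B}^2+\mathcal{L}(F(x_k)-F^*)$. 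Substituting $F(\bar x_k)=F^*$ converts $\alpha_k^2\mathcal{L}(F(x_k)-F^*)$ into $\alpha_k^2\mathcal{L}(F(x_k)-F(\bar x_k))$, and collecting the two function-value terms produces the coefficient $-\alpha_k(2-\alpha_k\mathcal{L})$. Taking total expectation and chaining with the two distance reductions from the first paragraph yields the recursion \eqref{spg_basic}.
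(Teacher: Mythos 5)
Your proof is correct and follows essentially the same route as the paper's (which defers to Lemma 5 of \cite{NecSin:21}): the three-point strong-convexity inequality for the prox, linearization of $f$ and $g$ at $x_k$ so that the composite subgradient $\nabla F(x_k,\zeta_k)$ appears at a single point, Young's inequality, and finally the stochastic bounded gradient condition of Lemma~\ref{bddsubgrad_F} under conditional expectation. The chain $\|v_k-\bar v_k\|\le\|v_k-\bar x_k\|\le\|u_k-\bar x_k\|$ also correctly accounts for the additional projection onto $\mathcal{Y}$ present in this version of the algorithm.
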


\noindent The following lemma establishes a relation between $x_k$ and $v_{k-1}$. The proof is  similar to Lemma  6 in~\cite{NecSin:21}.  

\begin{lemma}
	\label{lem:distanyy}
	Let  $h_j$, with $j=1:m$, be  convex functions and $\xi\ge 0$.	Additionally, assume that the bounded subgradient condition from Assumption \ref{assumption3} holds.   Then,  for any $y \in  \mathcal{Y}$ such that $(h(y,\xi_{k-1}))_+ = 0$, the following relation holds:
	\begin{align} \label{eq:x_k_v_k-1}
		\|x_{k}-y\|^2\le \|v_{k-1}-y\|^2-\beta(2-\beta) \left[ \frac{(h(v_{k-1},\xi_{k-1}))_+^2}{\mathcal{B}^2_h} \right],
	\end{align}
	with $\mathcal{B}_h$  given in Lemma \ref{bddsubgrad_h}.
\end{lemma}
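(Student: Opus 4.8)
The plan is to track the squared distance to the feasible test point $y$ through the two operations that turn $v_{k-1}$ into $x_k$ at iteration $k-1$, namely the Polyak-type subgradient step producing $z_{k-1} = v_{k-1} - \beta \frac{(h(v_{k-1},\xi_{k-1}))_+}{\|\nabla h(v_{k-1},\xi_{k-1})\|^2}\nabla h(v_{k-1},\xi_{k-1})$ and the subsequent projection $x_k = \Pi_{\mathcal{Y}}(z_{k-1})$. First I would peel off the projection: since $y \in \mathcal{Y}$ and the Euclidean projection onto a convex set is nonexpansive, $\|x_k - y\| = \|\Pi_{\mathcal{Y}}(z_{k-1}) - \Pi_{\mathcal{Y}}(y)\| \le \|z_{k-1} - y\|$, so it suffices to bound $\|z_{k-1} - y\|^2$.

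Write $h_+ := (h(v_{k-1},\xi_{k-1}))_+$ and $s := \nabla h(v_{k-1},\xi_{k-1})$, so that $z_{k-1} = v_{k-1} - \beta \frac{h_+}{\|s\|^2}\, s$. If $h_+ = 0$ the step does nothing ($z_{k-1} = v_{k-1}$, using $s = s_h \ne 0$) and both sides of the claimed inequality reduce to $\|v_{k-1}-y\|^2$, so the statement is trivial; hence assume $h_+ > 0$, in which case $s$ is a genuine subgradient $s \in \partial h(v_{k-1},\xi_{k-1})$. Expanding the square gives
\[ \|z_{k-1} - y\|^2 = \|v_{k-1} - y\|^2 - 2\beta\frac{h_+}{\|s\|^2}\langle s, v_{k-1} - y\rangle + \beta^2 \frac{h_+^2}{\|s\|^2}. \]
The crucial step is to lower-bound the inner product. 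By convexity of $h(\cdot,\xi_{k-1})$ and $s \in \partial h(v_{k-1},\xi_{k-1})$ we have $h(y,\xi_{k-1}) \ge h(v_{k-1},\xi_{k-1}) + \langle s, y - v_{k-1}\rangle$, i.e. $\langle s, v_{k-1} - y\rangle \ge h(v_{k-1},\xi_{k-1}) - h(y,\xi_{k-1})$. Here I invoke the hypothesis $(h(y,\xi_{k-1}))_+ = 0$, which means $h(y,\xi_{k-1}) \le 0$, together with $h(v_{k-1},\xi_{k-1}) = h_+$ (valid since $h_+ > 0$); these yield $\langle s, v_{k-1} - y\rangle \ge h_+$.

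Substituting this bound back and collecting the two $h_+$-terms gives
\[ \|z_{k-1} - y\|^2 \le \|v_{k-1} - y\|^2 - \beta(2-\beta)\frac{h_+^2}{\|s\|^2}. \]
Finally, since $\beta \in (0,2)$ the coefficient $\beta(2-\beta)$ is positive, so to obtain the stated clean bound I replace $\|s\|^2$ by its upper estimate: Lemma \ref{bddsubgrad_h} gives $\|s\| = \|\nabla h(v_{k-1},\xi_{k-1})\| \le \mathcal{B}_h$ on $\mathcal{Y}$, hence $h_+^2/\|s\|^2 \ge h_+^2/\mathcal{B}_h^2$. Chaining this with the nonexpansiveness inequality from the first step proves the claim. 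The main point to watch is the direction of the final estimate: because we are subtracting a positive quantity, we need the \emph{lower} bound $1/\|s\|^2 \ge 1/\mathcal{B}_h^2$, which is exactly what the bounded-subgradient assumption supplies; and the trivial $h_+ = 0$ case must be isolated so that division by $\|s\|^2$ is always legitimate.
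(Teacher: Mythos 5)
Your proof is correct and follows the standard route the paper relies on (it defers to Lemma 6 of \cite{NecSin:21}): nonexpansiveness of $\Pi_{\mathcal{Y}}$ to reduce to $z_{k-1}$, expansion of the square for the Polyak step, the subgradient inequality combined with $h(y,\xi_{k-1})\le 0$ to bound the cross term by $h_+$, and finally $\|\nabla h(v_{k-1},\xi_{k-1})\|\le \mathcal{B}_h$ in the correct (lower-bound) direction. The separate treatment of the $h_+=0$ case and the remark on the direction of the final estimate are exactly the right points of care; nothing is missing.
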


\noindent Taking now $y=\Pi_{\mathcal X} ({v}_{k-1}) \subseteq \mathcal{X} \subseteq \mathcal{Y}$, then $(h(\Pi_{\mathcal{X}}({v}_{k-1}),\xi_{k-1}))_+ = 0$ and
\begin{align*}  
	\text{dist}^2(x_{k}, \mathcal{X}) & = \|x_{k} - \Pi_{\mathcal X} ({x}_{k})\|^2 \le \|x_{k} - \Pi_{\mathcal X} ({v}_{k-1}) \|^2 \\  
	& \overset{\eqref{eq:x_k_v_k-1}}{\le}  \text{dist}^2(v_{k-1}, \mathcal{X}) -\beta(2-\beta) \frac{(h(v_{k-1},\xi_{k-1}))_+^2}{\mathcal{B}^2_h}\\
	& \le \text{dist}^2(v_{k-1}, \mathcal{X}). 
\end{align*}
Thus for any $q\ge1$, we have:
\begin{align}\label{eq:distvdistx}
	\text{dist}^{2q}(x_{k}, \mathcal{X}) \le \text{dist}^{2q}(v_{k-1}, \mathcal{X}).
\end{align}

\begin{lemma}\label{lem:xkvk-1}
	Let  Assumptions \ref{assumption3} and \ref{assumption4} hold and the random vectors $\xi$ and $\zeta$ be nonnegative. Then, the following relation is valid:
	\[ \mathbb{E}[\|x_{k}-\bar{x}_k\|^2] \le \mathbb{E}[\|v_{k-1}- \bar{v}_{k-1}\|^2] - \frac{\beta(2-\beta)}{c\mathcal{B}^2_h} \mathbb{E}\left[ \emph{dist}^{2q}(x_k, \mathcal{X}) \right], \]
	with $\mathcal{B}_h$ and $c$  given in Lemmas \ref{bddsubgrad_h} and \ref{linearreg_h}, respectively.
\end{lemma}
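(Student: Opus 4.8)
The plan is to combine the pathwise descent inequality of Lemma~\ref{lem:distanyy} with the stochastic H\"olderian growth condition of Lemma~\ref{linearreg_h}, the only new ingredients being a careful choice of the feasible anchor point $y$ and a conditioning argument that separates the randomness in $v_{k-1}$ from that in $\xi_{k-1}$. First I would apply Lemma~\ref{lem:distanyy} with $y = \bar{v}_{k-1} = \Pi_{\mathcal{X}^*}(v_{k-1})$. Since $\bar{v}_{k-1} \in \mathcal{X}^* \subseteq \mathcal{X} \subseteq \mathcal{Y}$, it is feasible, so $h_j(\bar{v}_{k-1}) \le 0$ for all $j$, and because $\xi_{k-1} \ge 0$ we get $(h(\bar{v}_{k-1}, \xi_{k-1}))_+ = (\max_j \xi_{k-1}^j h_j(\bar{v}_{k-1}))_+ = 0$; hence the hypothesis of Lemma~\ref{lem:distanyy} is met. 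This yields the pathwise bound
\[ \|x_k - \bar{v}_{k-1}\|^2 \le \|v_{k-1} - \bar{v}_{k-1}\|^2 - \beta(2-\beta)\frac{(h(v_{k-1}, \xi_{k-1}))_+^2}{\mathcal{B}_h^2}. \]
Since $\bar{x}_k = \Pi_{\mathcal{X}^*}(x_k)$ is the nearest optimal point to $x_k$ and $\bar{v}_{k-1} \in \mathcal{X}^*$, we have $\|x_k - \bar{x}_k\|^2 \le \|x_k - \bar{v}_{k-1}\|^2$, which transfers the left-hand side to $\|x_k - \bar{x}_k\|^2$. Choosing $y = \bar{x}_k$ instead would point the estimate in the wrong direction, so anchoring at $\bar{v}_{k-1}$ is essential.

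Next I would take expectations and handle the negative feasibility term via a tower-property argument. The key observation is that $v_{k-1}$ is computed in step~\eqref{eq:algstep1} and therefore does not depend on $\xi_{k-1}$, whereas the subsequent projection step~\eqref{eq:algstep2} that produces $x_k$ does. Conditioning on the sigma-algebra $\mathcal{G} = \sigma(\{\zeta_t, \xi_t : 0 \le t \le k-2\} \cup \{\zeta_{k-1}\})$, the point $v_{k-1}$ is $\mathcal{G}$-measurable while $\xi_{k-1}$ is a fresh draw independent of $\mathcal{G}$. Hence $\mathbb{E}[(h(v_{k-1}, \xi_{k-1}))_+^2 \mid \mathcal{G}]$ equals $\mathbb{E}_\xi[(h(y, \xi))_+^2]$ evaluated at the fixed point $y = v_{k-1} \in \mathcal{Y}$, and Lemma~\ref{linearreg_h} gives $\mathbb{E}[(h(v_{k-1}, \xi_{k-1}))_+^2 \mid \mathcal{G}] \ge \frac{1}{c}\,\text{dist}^{2q}(v_{k-1}, \mathcal{X})$. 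Taking total expectations then produces
\[ \mathbb{E}[\|x_k - \bar{x}_k\|^2] \le \mathbb{E}[\|v_{k-1} - \bar{v}_{k-1}\|^2] - \frac{\beta(2-\beta)}{c\,\mathcal{B}_h^2}\,\mathbb{E}[\text{dist}^{2q}(v_{k-1}, \mathcal{X})]. \]

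Finally, to replace $\text{dist}^{2q}(v_{k-1}, \mathcal{X})$ by $\text{dist}^{2q}(x_k, \mathcal{X})$ on the right-hand side, I invoke the already-established pathwise inequality~\eqref{eq:distvdistx}, namely $\text{dist}^{2q}(x_k, \mathcal{X}) \le \text{dist}^{2q}(v_{k-1}, \mathcal{X})$, which after taking expectations gives $\mathbb{E}[\text{dist}^{2q}(x_k, \mathcal{X})] \le \mathbb{E}[\text{dist}^{2q}(v_{k-1}, \mathcal{X})]$. Because $\beta \in (0,2)$ makes the coefficient $-\beta(2-\beta)/(c\mathcal{B}_h^2)$ strictly negative, this monotonicity sends the bound in the desired direction and yields exactly the claimed recursion. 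I expect the main subtlety to be the conditioning step: one must be explicit that $v_{k-1}$ is determined before $\xi_{k-1}$ is used (so that Lemma~\ref{linearreg_h}, which averages over a fresh constraint sample, applies to it), and one must keep track of the fact that the two distance notions differ, since $\|x_k - \bar{x}_k\|$ measures distance to the optimal set $\mathcal{X}^*$, whereas the H\"olderian growth controls distance to the feasible set $\mathcal{X}$.
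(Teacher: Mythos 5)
Your proposal is correct and follows essentially the same route as the paper: apply Lemma~\ref{lem:distanyy} with $y=\bar{v}_{k-1}$ (feasible since $\mathcal{X}^*\subseteq\mathcal{X}$), bound $\|x_k-\bar{x}_k\|\le\|x_k-\bar{v}_{k-1}\|$ by optimality of the projection, take (conditional) expectation over $\xi_{k-1}$ and invoke Lemma~\ref{linearreg_h}, then pass from $\mathrm{dist}^{2q}(v_{k-1},\mathcal{X})$ to $\mathrm{dist}^{2q}(x_k,\mathcal{X})$ via \eqref{eq:distvdistx}. Your conditioning on a sigma-algebra that also includes $\zeta_{k-1}$ (so that $v_{k-1}$ is measurable while $\xi_{k-1}$ remains fresh) is in fact a slightly more careful bookkeeping of the same argument.
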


\begin{proof}
	Note that for $\bar{v}_{k-1} \in \mathcal{X}^* \subseteq \mathcal{X} \subseteq \mathcal{Y}$ we have  $(h(\bar{v}_{k-1},\xi_{k-1}))_+ = 0$ and  using Lemma~\ref{lem:distanyy} with $y=\bar{v}_{k-1}$,  we get: 
	\begin{align*}
		\|x_k - \bar{x}_k\|^2  \le \|x_{k}-\bar{v}_{k-1}\|^2 \leq \|v_{k-1} -\bar{v}_{k-1}\|^2 - \beta(2-\beta) \left[\frac{(h(v_{k-1},\xi_{k-1}))_+^2}{\mathcal{B}^2_h}\right].
	\end{align*}
	Taking conditional expectation on  $\xi_{k-1}$ given  $\mathcal{F}_{[k-2]} $, we get:
	\begin{align*}
		\mathbb{E}_{\xi_{k-1}} [\|x_k - \bar{x}_k\|^2 |  \mathcal{F}_{[k-2]} ] &\leq \|v_{k-1} -\bar{v}_{k-1}\|^2 - \beta(2-\beta) \mathbb{E}_{\xi_{k-1}} \left[\frac{(h(v_{k-1},\xi_{k-1}))_+^2}{\mathcal{B}^2_h}| \mathcal{F}_{[k-2]} \right]\\
		& \overset{\eqref{qreg}}{\leq} \|v_{k-1} -\bar{v}_{k-1}\|^2 - \frac{ \beta(2-\beta)}{c \mathcal{B}^2_h}  \text{dist}^{2q}(v_{k-1}, \mathcal{X})\\
		& \overset{\eqref{eq:distvdistx}}{\leq} \|v_{k-1} -\bar{v}_{k-1}\|^2 - \frac{ \beta(2-\beta)}{c \mathcal{B}^2_h}  \text{dist}^{2q}(x_k, \mathcal{X}).
	\end{align*}
	Taking now the full expectation, we obtain our statement. 
\end{proof}

\noindent For simplicity of the exposition let us introduce the following  constant: 
\begin{align}\label{C}
 C_{\beta,c,\mathcal{B}_h} := \frac{ \beta(2-\beta)}{c \mathcal{B}^2_h} > 0.  
\end{align}

\red{\noindent We impose the following conditions on the stepsize $\alpha_k$:
	\begin{align}
		\label{eq:alk}
		0 < \alpha_k \leq \alpha_k(2-\alpha_k \mathcal{L}) <1 \;\; \iff \;\; 	 \alpha_k \in 
		\begin{cases}
			\left(0, \frac{1}{2} \right) \;\;  \text{if} \; \mathcal{L} =0  \\
			\left(0, \frac{1- \sqrt{(1-\mathcal{L})_+}}{\mathcal{L}} \right) \;\;  \text{if} \; \mathcal{L} > 0.
		\end{cases} 
	\end{align}	 
}
Then, we can define the following  average sequence generated by the algorithm SSP: 
\[  \hat{x}_k = \frac{\sum_{j=1}^{k} \alpha_j \red{(2 -\alpha_j \mathcal{L})} x_j}{S_k}, \quad  \text{where}  \; S_k = \sum_{j=1}^{k} \alpha_j \red{(2 -\alpha_j \mathcal{L})}.   \] 
\red{Note that this type of average sequence is also consider in \cite{GarGow:23} for unconstrained stochastic optimization problems.}
The next theorem derives \blue{an estimate} for the average sequence  $\hat{x}_k$. 

\begin{theorem}
	\label{th:nonstrongconv}
	Let  $f_i$, $g_i$, with $i=1:N$, and  $h_j$, with $j=1:m$,  be  convex functions. Additionally,   Assumptions \ref{assumption1},  \ref{assumption3} and \ref{assumption4} hold and the random vectors $\zeta,\; \xi$ are nonnegative.  Further, consider a nonincreasing  positive stepsize sequence \red{$\alpha_k$ satisfying \eqref{eq:alk}} and  stepsize  $\beta \in (0, 2)$. Then, we have the following \blue{estimates} for the average sequence  $\hat{x}_k$ in terms of optimality and feasibility violation for problem \eqref{eq:prob}: 
	\begin{align*}
		& \mathbb{E}\left[  F(\hat{x}_k ) - F^*  \right] \leq    \frac{\| v_0 - \overline{v}_{0} \|^2}{ S_k }  +  \frac{\mathcal{B}^2 \sum_{t=1}^{k} \alpha_t^2 }{ S_k }, \\
		& \mathbb{E}\left[ \emph{dist}^{2}(\hat{x}_k, \mathcal{X}) \right] \le \left(\frac{1}{C_{\beta,c,\mathcal{B}_h}\cdot S_k}\right)^{\frac{1}{q}} \left[ \|v_0 - \bar{v}_0\|^{\frac{2}{q}} + \mathcal{B}^{\frac{2}{q}} \sum_{t=1}^{k} \alpha_t^{\frac{2}{q}} \right].
	\end{align*}  
\end{theorem}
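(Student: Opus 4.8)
The plan is to fuse the two one-step descent estimates into a single recursion on the scalar sequence $a_k := \mathbb{E}[\|v_k - \bar{v}_k\|^2]$, telescope it, and then read off the two estimates by discarding one nonnegative term at a time and applying Jensen's inequality to the convex average $\hat{x}_k$. First I would substitute Lemma \ref{lem:xkvk-1} (which bounds $\mathbb{E}[\|x_k-\bar{x}_k\|^2]$ in terms of $a_{k-1}$ and $-C_{\beta,c,\mathcal{B}_h}\mathbb{E}[\text{dist}^{2q}(x_k,\mathcal{X})]$) into Lemma \ref{th:spg_basic}, using $F(\bar{x}_k)=F^*$, to obtain for every $k\ge 1$
\[ a_k \le a_{k-1} - \alpha_k(2-\alpha_k\mathcal{L})\,\mathbb{E}[F(x_k)-F^*] - C_{\beta,c,\mathcal{B}_h}\,\mathbb{E}[\text{dist}^{2q}(x_k,\mathcal{X})] + \alpha_k^2\mathcal{B}^2. \]
Summing from $1$ to $k$, dropping $-a_k\le 0$ and using $a_0=\|v_0-\bar{v}_0\|^2$, I arrive at the master inequality
\[ \sum_{j=1}^k \alpha_j(2-\alpha_j\mathcal{L})\mathbb{E}[F(x_j)-F^*] + C_{\beta,c,\mathcal{B}_h}\sum_{j=1}^k\mathbb{E}[\text{dist}^{2q}(x_j,\mathcal{X})] \le \|v_0-\bar{v}_0\|^2 + \mathcal{B}^2\sum_{j=1}^k\alpha_j^2. \]

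For the optimality estimate I would discard the (nonnegative) distance sum, divide by $S_k$, and invoke convexity of $F$ through Jensen's inequality with weights $\alpha_j(2-\alpha_j\mathcal{L})$, namely $F(\hat{x}_k)\le S_k^{-1}\sum_j\alpha_j(2-\alpha_j\mathcal{L})F(x_j)$; this delivers the first bound verbatim. For the feasibility estimate I would instead discard the optimality sum, yielding $C_{\beta,c,\mathcal{B}_h}\sum_j\mathbb{E}[\text{dist}^{2q}(x_j,\mathcal{X})]\le \|v_0-\bar{v}_0\|^2+\mathcal{B}^2\sum_j\alpha_j^2$. Then, using convexity of $\text{dist}^{2q}(\cdot,\mathcal{X})$ (the convex increasing map $t\mapsto t^{2q}$ composed with the convex distance function) and the stepsize bound $\alpha_j(2-\alpha_j\mathcal{L})<1$ from \eqref{eq:alk}, I would pass from weighted Jensen to the unweighted sum, $\mathbb{E}[\text{dist}^{2q}(\hat{x}_k,\mathcal{X})]\le S_k^{-1}\sum_j\alpha_j(2-\alpha_j\mathcal{L})\mathbb{E}[\text{dist}^{2q}(x_j,\mathcal{X})]\le S_k^{-1}\sum_j\mathbb{E}[\text{dist}^{2q}(x_j,\mathcal{X})]$, which combined with the discarded-sum bound gives $\mathbb{E}[\text{dist}^{2q}(\hat{x}_k,\mathcal{X})]\le (\|v_0-\bar{v}_0\|^2+\mathcal{B}^2\sum_j\alpha_j^2)/(C_{\beta,c,\mathcal{B}_h}S_k)$. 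Finally I would raise both sides to the power $1/q$, move the expectation inside via Jensen for the concave map $t\mapsto t^{1/q}$ (so $\mathbb{E}[\text{dist}^2]\le(\mathbb{E}[\text{dist}^{2q}])^{1/q}$), and use subadditivity of $t\mapsto t^{1/q}$ (valid since $q\ge 1$) both to split the numerator and to pass from $(\sum_j\alpha_j^2)^{1/q}$ to $\sum_j\alpha_j^{2/q}$, recovering exactly the stated feasibility bound.

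The step I expect to be the main obstacle is the discarding of the optimality sum in the feasibility argument. Because the iterates $x_j$ need not be feasible, $F(x_j)-F^*$ is not sign-definite, so $\sum_j\alpha_j(2-\alpha_j\mathcal{L})\mathbb{E}[F(x_j)-F^*]$ cannot be dropped for free; one must argue it is nonnegative (for instance via $\sum_j\alpha_j(2-\alpha_j\mathcal{L})\mathbb{E}[F(x_j)-F^*]\ge S_k\,\mathbb{E}[F(\hat{x}_k)-F^*]$ from convexity, together with control of the averaged objective value) or otherwise absorb its negative part into the distance budget. Pinning down this point cleanly is where the care lies; by contrast the telescoping and the exponent bookkeeping with $t^{1/q}$ are mechanical once $q\ge 1$ is invoked for concavity and subadditivity.
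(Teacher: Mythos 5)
Your proposal is essentially the paper's own proof: it combines Lemma \ref{lem:xkvk-1} with Lemma \ref{th:spg_basic}, telescopes, and applies Jensen's inequality to the weighted average $\hat{x}_k$, the only cosmetic difference being where the bound $\alpha_t(2-\alpha_t\mathcal{L})<1$ from \eqref{eq:alk} is invoked (the paper multiplies the coefficient of the distance term by $\alpha_t(2-\alpha_t\mathcal{L})$ \emph{before} summing so that the summed distance terms already carry the averaging weights of $\hat{x}_k$, whereas you apply the same bound after Jensen --- the two are equivalent). The obstacle you single out, namely that the sign-indefinite sum $\sum_t \alpha_t(2-\alpha_t\mathcal{L})\,\mathbb{E}[F(x_t)-F^*]$ cannot be discarded for free in the feasibility estimate because the iterates $x_t$ need not be feasible, is a legitimate observation, but it is not a point of divergence: the paper's proof drops that sum in exactly the same place without further justification, so your argument reproduces the published one, including that unaddressed step.
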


\begin{proof}
Combining Lemma \ref{lem:xkvk-1} with Lemma \ref{th:spg_basic}, we have:
	\begin{align*}
		& \mathbb{E}\left[\|v_{k} - \bar{v}_{k} \|^2\right] + \frac{ \beta(2-\beta)}{c \mathcal{B}^2_h}  \mathbb{E} [ \text{dist}^{2q}(x_k, \mathcal{X})] +  \alpha_k  (2 - \alpha_k \mathcal{L}) \mathbb{E}\left[ F(x_k) - F(\bar{x}_{k}) \right]\\
		& \leq \mathbb{E} [ \|v_{k-1} -\bar{v}_{k-1}\|^2 ]  + \alpha_k^2 \mathcal{B}^2.
	\end{align*}
	\red{Together with the fact that $\alpha_k(2-\alpha_k \mathcal{L}) <1$, it yields:
	\begin{align*}
		& \mathbb{E}\left[\|v_{k} - \bar{v}_{k} \|^2\right] + C_{\beta,c,\mathcal{B}_h}\alpha_k(2-\alpha_k \mathcal{L}) \mathbb{E} [ \text{dist}^{2q}(x_k, \mathcal{X})] +  \alpha_k  (2 - \alpha_k \mathcal{L}) \mathbb{E}\left[ F(x_k) - F(\bar{x}_{k}) \right]\\
		& \leq \mathbb{E} [ \|v_{k-1} -\bar{v}_{k-1}\|^2 ]  + \alpha_k^2 \mathcal{B}^2.
	\end{align*}
}
	Summing  this relation from $t=1:k$, we get:
	\begin{align*}
		& \mathbb{E}\left[\|v_{k} - \bar{v}_{k} \|^2 \right]  + C_{\beta,c,\mathcal{B}_h} \sum_{t=1}^{k} \red{\alpha_t  (2 - \alpha_t \mathcal{L})} \mathbb{E}\left[ \text{dist}^{2q}(x_{t}, \mathcal{X}) \right] \\
		& \quad+ \sum_{t=1}^{k} \red{\alpha_t  (2 - \alpha_t \mathcal{L})}   \mathbb{E}\left[  F(x_t) - F^*  \right]  \leq   \| v_0 - \bar{v}_{0} \|^2  +  \mathcal{B}^2 \sum_{t=1}^{k} \alpha_t^2.
	\end{align*}
From the definition of the  average sequence $\hat{x}_k$ and the  convexity of $F$ and of $ \text{dist}^2(\cdot, \mathcal{X})$, we get sublinear rate in expectation for the average sequence in terms of  optimality: 
	\begin{align*}
		&  \mathbb{E}\left[ F(\hat{x}_k ) - F^*  \right] \leq   \sum_{t=1}^{k} \frac{\red{\alpha_t  (2 - \alpha_t \mathcal{L})}}{S_k}   \mathbb{E}\left[  F(x_t) - F^*  \right]    \leq   \frac{\| v_0 - \bar{v}_{0} \|^2}{S_k}  +  \mathcal{B}^2 \frac{\sum_{t=1}^{k} \alpha_t^2}{S_k}.
	\end{align*}  
	Also by using Jensen's inequality and $q \geq 1$, we have:
	\begin{align*}
		& C_{\beta,c,\mathcal{B}_h} \left( \mathbb{E}\left[ \text{dist}^{2}(\hat{x}_k, \mathcal{X}) \right] \right)^q \leq  C_{\beta,c,\mathcal{B}_h} \mathbb{E}\left[ \text{dist}^{2q}(\hat{x}_k, \mathcal{X}) \right] \\
		& \leq C_{\beta,c,\mathcal{B}_h} \sum_{t=1}^{k} \frac{\red{\alpha_t  (2 - \alpha_t \mathcal{L})}}{S_k} \mathbb{E}\left[ \text{dist}^{2q}(x_{t}, \mathcal{X}) \right]  \leq   \frac{\| v_0 - \bar{v}_{0} \|^2}{S_k}  +  \mathcal{B}^2 \frac{\sum_{t=1}^{k} \alpha_t^2}{S_k}.
	\end{align*}
These conclude  our statements.                                             
\end{proof}
\noindent  For stepsize $\alpha_k=\frac{\alpha_0}{(k+1)^\gamma}$, with  $\gamma \in [1/2, 1)$  \red{and $\alpha_0$ satisfying  \eqref{eq:alk}}, we have:
\[ \frac{1}{\alpha_0}S_k \red{\overset{\eqref{eq:alk}}{\geq}} \frac{1}{\alpha_0} \sum_{t=1}^{k} \alpha_t \geq {\cal O}(k^{1-\gamma}) \quad \text{and} \quad \frac{1}{\alpha_0^2}\sum_{t=1}^{k} \alpha_t^2 \leq 
\begin{cases}
	{\cal O}(1) \;\;  \text{if} \; \gamma>1/2 \\
	{\cal O}(\ln(k)) \;\;\;  \text{if} \; \gamma=1/2. 
\end{cases} 
\] 
\noindent Consequently, for  $\gamma \in (1/2, 1)$ we obtain from Theorem \ref{th:nonstrongconv} the following sublinear convergence rates: 
\begin{align}\label{optimal_tau}
	& \mathbb{E}\left[ ( F(\hat{x}_k ) - F^* ) \right] \leq    \frac{\| v_0 - \bar{v}_{0} \|^2}{\alpha_0 \red{{\cal O}(k^{1-\gamma})}}  +  \frac{\alpha_0 \mathcal{B}^2\red{{\cal O}(1)}}{\red{{\cal O}(k^{1-\gamma})}}, \\
	& \mathbb{E}\left[ \text{dist}^2(\hat{x}_k, \mathcal{X}) \right]  \leq  \left(\frac{1}{C_{\beta,c,\mathcal{B}_h}\cdot \red{ \alpha_0 {\cal O}(k^{1-\gamma})}}\right)^{\frac{1}{q}} \left[ \|v_0 - \bar{v}_0\|^{\frac{2}{q}} + (\alpha_0^2 \mathcal{B}^2 \red{{\cal O}(1)})^{\frac{1}{q}}  \right].\nonumber
\end{align}

\noindent For the particular choice $\gamma=1/2$ we can perform the same analysis as before and obtain similar convergence bounds (\red{by replacing ${\cal O}(1)$ with $	{\cal O}(\ln(k))$}). Now, if we neglect the logarithmic terms, we get exactly the same rates as in \eqref{optimal_tau}, but replacing $k^{1-\gamma}$ with $k^{1/2}$. Hence, we omit the details for this case.  \\   

\noindent Minimizing the right hand side of the bound for optimality in \eqref{optimal_tau} w.r.t. $\alpha_0$, we get an optimal choice for the initial stepsize, i.e., $\alpha_0^* = \frac{\| v_0 - \bar{v}_{0} \|}{\mathcal{B}}$. Since $\alpha_0$ must be in \red{$\left(0, \min \left(\frac{1}{2}, \frac{1 - \sqrt{(1 - \mathcal{L})_+}}{\mathcal{L}}\right)\right)$}, then we consider $\alpha^*_0 = \min \left( \frac{\| v_0 - \bar{v}_{0} \|}{\mathcal{B}}, \red{\min \left(\frac{1}{2}, \frac{1 - \sqrt{(1 - \mathcal{L})_+}}{\mathcal{L}}\right)} - \delta \right)$ for some $\delta \in (0,\frac{1}{2})$. We distinguish two cases:

\noindent \textbf{Case 1:} If $\alpha_0^* = \frac{\mathcal{R}_0}{\mathcal{B}} \le \red{\min \left(\frac{1}{2}, \frac{1 - \sqrt{(1 - \mathcal{L})_+}}{\mathcal{L}}\right)} - \delta$, where $\mathcal{R}_0$ is an estimate of  $\| v_0 - \bar{v}_{0} \|$, then the expressions for the rates from \eqref{optimal_tau} are \red{(after ignoring ${\cal O}(1)/ {\cal O}(\ln (k))$ terms)}:
\begin{align*}
	& \mathbb{E}\left[ ( F(\hat{x}_k ) - F^* ) \right] \leq  \frac{\mathcal{B} \| v_0 - \bar{v}_{0} \|^2}{\mathcal{R}_0 \red{{\cal O}(k^{1-\gamma})}}  +  \frac{\mathcal{R}_0 \mathcal{B}}{ \red{{\cal O}(k^{1-\gamma})}},\\
	& \mathbb{E}\left[ \text{dist}^2(\hat{x}_k, \mathcal{X}) \right]  \leq  \red{\left(\frac{\mathcal{B}}{C_{\beta,c,\mathcal{B}_h} \mathcal{R}_0 \cdot \mathcal{O}(k^{1-\gamma})}\right)^{\frac{1}{q}} \left[ \|v_0 - \bar{v}_0\|^{\frac{2}{q}} + (\mathcal{R}_0 )^{\frac{2}{q}}  \right].}
\end{align*}   
Using the definition of $C_{\beta,c,\mathcal{B}_h}$ and  replacing the values for $\mathcal{L}$, $\mathcal{B}$, $\mathcal{B}_h$ and $c$ from Theorem \ref{sampling_cases} for both types of samplings, i.e., partition or $\tau_1$-, $\tau_2$-nice samplings,  we get:
\begin{align*}
	& \mathbb{E}\left[ ( F(\hat{x}_k ) - F^* ) \right] \leq  \sqrt{\frac{N}{\tau_1}}\frac{B}{ \mathcal{O}(k^{1-\gamma})} \left( \frac{\| v_0 - \bar{v}_{0} \|^2}{\mathcal{R}_0} + \mathcal{R}_0 \right), \\
	& \mathbb{E}\left[ \text{dist}^2(\hat{x}_k, \mathcal{X}) \right]  \leq \red{ \left(\sqrt{\frac{N}{\tau_1}} \frac{ B m \bar{c} \max_{ j=1:m}^2 B_j}{\tau_2 \cdot  \beta(2-\beta) \mathcal{R}_0 \cdot\mathcal{O}(k^{1-\gamma})}\right)^{\frac{1}{q}} \left[ \|v_0 - \bar{v}_0\|^{\frac{2}{q}} + (\mathcal{R}_0)^{\frac{2}{q}} \right].}
\end{align*}

\noindent \textbf{Case 2:} If $\alpha_0^* = \red{\min \left(\frac{1}{2}, \frac{1 - \sqrt{(1 - \mathcal{L})_+}}{\mathcal{L}}\right)} - \delta < \frac{\| v_0 - \bar{v}_{0} \|}{\mathcal{B}}$, for some $\delta \in (0,1/2)$. Then, the expressions for the rates from \eqref{optimal_tau} are \red{(after ignoring ${\cal O}(1)/ {\cal O}(\ln (k))$ terms)}:
\begin{align}
	& \mathbb{E}\left[ ( F(\hat{x}_k ) - F^* ) \right] \leq    \frac{\| v_0 - \bar{v}_{0} \|^2}{\alpha_0^*\cdot {\cal O} (k^{1-\gamma})}  +  \frac{\alpha_0^*\cdot \mathcal{B}^2}{  {\cal O} (k^{1-\gamma})} \le \frac{2 \| v_0 - \bar{v}_{0} \|^2}{\alpha_0^*\cdot {\cal O} (k^{1-\gamma})} , \label{eq:11}\\
	& \mathbb{E}\left[ \text{dist}^2(\hat{x}_k, \mathcal{X}) \right]  \leq  \left(\frac{1}{C_{\beta,c,\mathcal{B}_h} \red{\alpha_0^*}\cdot  {\cal O} (k^{1-\gamma}) }\right)^{\frac{1}{q}} \left[ \|v_0 - \bar{v}_0\|^{\frac{2}{q}} + \left((\alpha_0^*)^2\mathcal{B}^2 \right)^{\frac{1}{q}} \right] \nonumber\\
	& \qquad \qquad \qquad\quad \le \left(\frac{1}{C_{\beta,c,\mathcal{B}_h}\red{\alpha_0^*} \cdot  {\cal O} (k^{1-\gamma})}\right)^{\frac{1}{q}} \left[ 2\|v_0 - \bar{v}_0\|^{\frac{2}{q}}\right]. \label{eq:12}
\end{align}

\noindent \red{Consider the case when  $\alpha^*_0 = \frac{1}{2} - \delta$, from \eqref{eq:11}, and \eqref{eq:12}, we have:
\begin{align*}
	& \mathbb{E}\left[ ( F(\hat{x}_k ) - F^* ) \right] \leq \frac{4 \| v_0 - \bar{v}_{0} \|^2}{ (1 - 2 \delta) {\cal O} (k^{1-\gamma})} , \\
	& \mathbb{E}\left[ \text{dist}^2(\hat{x}_k, \mathcal{X}) \right] \le \left(\frac{2}{C_{\beta,c,\mathcal{B}_h} (1 - 2 \delta) {\cal O} (k^{1-\gamma})}\right)^{\frac{1}{q}} \left[ 2\|v_0 - \bar{v}_0\|^{\frac{2}{q}}\right].
\end{align*}
 Using the definition of $C_{\beta,c,\mathcal{B}_h}$ and the expressions for $\mathcal{B}_h$ and $c$ from Theorem \ref{sampling_cases} for the partition  or  $\tau_1$-, $\tau_2$-nice samplings, we get:
\begin{align*}
	& \mathbb{E}\left[ ( F(\hat{x}_k ) - F^* ) \right] \leq  \frac{4 \| v_0 - \bar{v}_{0} \|^2}{(1 - 2 \delta) {\cal O} (k^{1-\gamma})}, \\
	& \mathbb{E}\left[ \text{dist}^2(\hat{x}_k, \mathcal{X}) \right]  \leq  \left(\frac{ 2m \bar{c} \max_{ j=1:m}^2 B_j}{\tau_2 \cdot  \beta(2-\beta)\cdot (1 - 2 \delta) {\cal O} (k^{1-\gamma})}\right)^{\frac{1}{q}} \left[ 2 \|v_0 - \bar{v}_0\|^{\frac{2}{q}} \right].
\end{align*}
}

\noindent\red{When  $\alpha^*_0 = \frac{1 - \sqrt{(1 - \mathcal{L})_+}}{\mathcal{L}} -\delta$, from \eqref{eq:11}, and \eqref{eq:12}, we have:
	\begin{align*}
		& \mathbb{E}\left[ ( F(\hat{x}_k ) - F^* ) \right] \leq \frac{2\mathcal{L} \| v_0 - \bar{v}_{0} \|^2}{(1 - \sqrt{(1 - \mathcal{L})_+} - \delta \mathcal{L}) {\cal O} (k^{1-\gamma})} , \\
		& \mathbb{E}\left[ \text{dist}^2(\hat{x}_k, \mathcal{X}) \right] \le \left(\frac{2\mathcal{L}}{C_{\beta,c,\mathcal{B}_h} (1 - \sqrt{(1 - \mathcal{L})_+} - \delta \mathcal{L}) {\cal O} (k^{1-\gamma})}\right)^{\frac{1}{q}} \left[ 2\|v_0 - \bar{v}_0\|^{\frac{2}{q}}\right].
	\end{align*}
}

\noindent Using the definition of $C_{\beta,c,\mathcal{B}_h}$ and the expressions for $\mathcal{L}$, $\mathcal{B}$, $\mathcal{B}_h$ and $c$ from Theorem \ref{sampling_cases} for the partition  or  $\tau_1$-, $\tau_2$-nice samplings, we get:
\begin{align*}
	& \mathbb{E}\left[ ( F(\hat{x}_k ) - F^* ) \right] \leq  \frac{N}{\tau_1}  \frac{2 L\| v_0 - \bar{v}_{0} \|^2}{\red{\left(1- \sqrt{(1-\frac{N}{\tau_1}L)_+} - \delta \frac{N}{\tau_1}L \right)}\cdot{\cal O} (k^{1-\gamma})}, \\
	& \mathbb{E}\left[ \text{dist}^2(\hat{x}_k, \mathcal{X}) \right] \\
	& \leq \! \left( \!\red{\frac{N}{\tau_1}} \frac{m}{\tau_2}\frac{ L \bar{c} \max_{ j=1:m}^2 B_j}{  \beta(2-\beta) \red{\left(1- \sqrt{(1-\frac{N}{\tau_1}L)_+} - \delta \frac{N}{\tau_1}L \right)} \cdot{\cal O} (k^{1-\gamma})} \!\right)^{\frac{1}{q}} \!\left[ 2 \|v_0 - \bar{v}_0\|^{\frac{2}{q}} \right].
\end{align*}

\noindent Note that for the initial stepsize choices  $\alpha_0^* = \frac{\red{\mathcal{R}_0 }}{\mathcal{B}}\; \text{or}\; \alpha_0^* = \red{ \frac{1 - \sqrt{(1 - \mathcal{L})_+}}{\mathcal{L}}} - \delta$ and  for the two particular choices of the sampling (partition  or nice samplings), we obtain convergence rates depending explicitly on mini-batch sizes $\tau_1$ and $\tau_2$, namely $\left(\sqrt{\frac{N}{\tau_1}}, \left( \red{\sqrt{\frac{N}{\tau_1}}} \frac{m}{\tau_2} \right)^{1/q}\right)$ or  $ \left(\frac{N}{\tau_1}, \left(\red{\frac{N}{\tau_1}} \frac{m}{\tau_2} \right)^{1/q}\right)$,  respectively.  Hence, in these settings we have linear dependence on the mini-batch sizes $(\tau_1, \tau_2)$ for algorithm \texttt{Mini-batch SSP}.\\  

\noindent Furthermore, since in the convex case we can consider a  stepsize sequence $\alpha_k=\frac{\alpha_0}{(k+1)^\gamma}$, then for $\alpha_0 = \frac{\red{\mathcal{R}_0 }}{\mathcal{B}}\; \text{or}\; \alpha_0 = \red{\frac{1 - \sqrt{(1 - \mathcal{L})_+}}{\mathcal{L}}} - \delta$ one can notice immediately that our stepsize sequence $\alpha_k$ also depends linearly on the mini-batch size $\tau_1$ for the two particular choices of sampling (partition  or nice samplings), i.e.,  $\alpha_k=\mathcal{O} \left( \frac{\tau_1}{N(k+1)^\gamma} \right)$. \\

\noindent Finally, one can notice that when $B=0$, from  Theorem \ref{th:nonstrongconv} improved rates can be derived for \texttt{Mini-batch SSP} in the convex case. For example, for stepsize $\alpha_k=\frac{\alpha_0}{(k+1)^\gamma}$, with  $\gamma \in [0, 1)$ and $\alpha_0 = \red{\min \left(\frac{1}{2}, \frac{1 - \sqrt{(1 - \mathcal{L})_+}}{\mathcal{L}}\right)} - \delta$, we obtain convergence rates for  $\hat{x}_k$  in optimality and feasibility violation of order  ${\cal O} \left( \frac{N}{\tau_1 k^{1-\gamma}} \right)$ and ${\cal O} \left( \frac{\red{N}m}{\red{\tau_1} \tau_2 k^{1-\gamma}} \right)^{\frac{1}{q}}$, respectively. In particular,  for $\gamma=0$ these rates become of order   ${\cal O} \left( \frac{N}{\tau_1 k} \right)$ and ${\cal O} \left( \frac{\red{N}m}{\red{\tau_1}\tau_2 k} \right)^{\frac{1}{q}}$. \\

\noindent In conclusion, by specializing our Theorem \ref{th:nonstrongconv} to different mini-batching strategies, such as partition  or nice samplings, we derive explicit expressions for the stepsize $\alpha_k$ as a function of the mini-batch size and, consequently,  convergence rates depending linearly  on the mini-batch sizes $(\tau_1, \tau_2)$. Hence, Theorem \ref{th:nonstrongconv} shows that  a mini-batch variant of the stochastic subgradient projection scheme is more beneficial than the nonmini-batch variant. 


\subsection{Convergence analysis: \red{strongly} convex objective function} \label{sec3.2}
In this section, we additionally assume the inequality from Assumption \ref{assumption2} holds.  The next lemma derives an improved recurrence  for the sequence $v_k$  under  the strongly convex assumption. The proof is similar to Lemma 8 in \cite{NecSin:21}. 
\begin{lemma}
	\label{lem:reccStrongCon}
	Let  $f_i, \; g_i$, with $i=1:N$ and  $h_j$, with $j=1:m$,  be  convex functions.  Additionally,  Assumptions \ref{assumption1}--\ref{assumption4} hold, \red{with $\mu>0$},  and the random vectors $\zeta,\; \xi$ are nonnegative. Define \blue{$k_0 = \lfloor\frac{8 \mathcal{L}}{\mu} - 1\rfloor$}, $\beta \in \left(0,2\right)$, $\theta_{\mathcal{L},\mu} \!=\! 1  \!-\! \mu/(4\mathcal{L})$  and $\alpha_k \!=\! \frac{4}{\mu} \gamma_{k}$,  where $\gamma_k$ is given~by:  
	\begin{equation*}
		\gamma_{k} = \left\{\begin{array}{ll}\frac{\mu}{4\mathcal{L}} & \text{\emph{if}}\;\; k\leq k_0\\
			\frac{2}{k+1}& \text{\emph{if}}\;\; k > k_0.
		\end{array}
		\right.
	\end{equation*}
	Then, the iterates of Algorithm Mini-batch SSP satisfy the following recurrence:
	\begin{align*}
		& \mathbb{E}[\|v_{k_0} - x^*\|^2] 
		\leq    \left\{\begin{array}{ll}    \frac{\mathcal{B}^2}{\mathcal{L}^2}    & \text{\emph{if}} \;\; \theta_{\mathcal{L},\mu}  \leq 0\\
			\theta_{\mathcal{L},\mu}^{k_0}  \|v_{0} - x^*\|^2 +  \frac{1 - \theta_{\mathcal{L},\mu}^{k_0}  }{1 - \theta_{\mathcal{L},\mu}}   \red{\left( 1 + \frac{2}{C_{\beta,c,\mathcal{B}_h} \theta_{\mathcal{L},\mu}} \right)} \frac{\mathcal{B}^2}{\mathcal{L}^2}  & \text{\emph{if}} \;\; \theta_{\mathcal{L},\mu}  >  0, 
		\end{array}
		\right. \\
		&\mathbb{E}[\|v_k - x^*\|^2] +\gamma_{k} \mathbb{E}[\| x_k - x^*\|^2] + \red{\frac{1}{6}} C_{\beta,c,\mathcal{B}_h}\mathbb{E}[\emph{dist}^{2q}(x_{k}, \mathcal{X})]\\
		&  \leq \left(1 - \gamma_{k}\right)\mathbb{E}[\|v_{k-1}- x^*\|^2] + \red{\left( 1 + \frac{6}{C_{\beta,c,\mathcal{B}_h} } \right)} \frac{16}{\mu^2}\gamma_{k}^2 \mathcal{B}^2  \quad  \forall k> k_0.
	\end{align*}
\end{lemma}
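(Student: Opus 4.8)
The plan is to use the two one-step estimates already in hand — the descent inequality \eqref{spg_basic} of Lemma \ref{th:spg_basic} and the feasibility contraction of Lemma \ref{lem:xkvk-1} — and to inject the strong convexity of Assumption \ref{assumption2} (with $\mu>0$) so as to convert the optimality gap into a genuine contraction. Since $F$ is now strongly convex on $\mathcal{Y}$, the optimal set is the singleton $\mathcal{X}^*=\{x^*\}$, so $\bar v_k=\bar x_k=x^*$ for all $k$ and the two lemmas read, writing $a_k=\mathbb{E}[\|v_k-x^*\|^2]$, as $a_k\le\mathbb{E}[\|x_k-x^*\|^2]-\alpha_k(2-\alpha_k\mathcal{L})\,\mathbb{E}[F(x_k)-F^*]+\alpha_k^2\mathcal{B}^2$ and $\mathbb{E}[\|x_k-x^*\|^2]\le a_{k-1}-C_{\beta,c,\mathcal{B}_h}\,\mathbb{E}[\text{dist}^{2q}(x_k,\mathcal{X})]$. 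Applying \eqref{as:strong_spg} at the pair $(x^*,x_k)$ gives the lower bound $F(x_k)-F^*\ge\langle\nabla F(x^*),x_k-x^*\rangle+\tfrac{\mu}{2}\|x_k-x^*\|^2$, which I substitute into the descent inequality.

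The crux, and the step I expect to be the main obstacle, is the cross term $\langle\nabla F(x^*),x_k-x^*\rangle$, which need not be nonnegative: $x^*$ is a \emph{constrained} minimizer, so $\nabla F(x^*)\neq 0$ in general, while $x_k$ may be infeasible. I would split $x_k-x^*=(x_k-\Pi_{\mathcal{X}}(x_k))+(\Pi_{\mathcal{X}}(x_k)-x^*)$; the second inner product is nonnegative by the first-order optimality condition for $\min_{x\in\mathcal{X}}F(x)$, while the first is bounded below by $-\|\nabla F(x^*)\|\,\text{dist}(x_k,\mathcal{X})$. Controlling $\|\nabla F(x^*)\|$ by $B\le\mathcal{B}$ through \eqref{as:main1_spg2}, I then absorb this residual into the feasibility term: a (weighted) Young inequality splits $\|\nabla F(x^*)\|\,\text{dist}(x_k,\mathcal{X})$ into a fraction of $\text{dist}^{2q}(x_k,\mathcal{X})$, to be cancelled against the $C_{\beta,c,\mathcal{B}_h}\text{dist}^{2q}$ supplied by Lemma \ref{lem:xkvk-1}, plus a noise term proportional to $\tfrac{1}{C_{\beta,c,\mathcal{B}_h}}\alpha_k^2\mathcal{B}^2$. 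Calibrating the Young parameter so that exactly $\tfrac16 C_{\beta,c,\mathcal{B}_h}\text{dist}^{2q}$ survives is what produces the factors $\tfrac16$ and $1+\tfrac{6}{C_{\beta,c,\mathcal{B}_h}}$; I expect the bookkeeping of these constants (and, for $q>1$, the matching of the Young exponents $\tfrac{2q}{2q-1}$ and $2q$) to be the delicate part.

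After eliminating $F(x_k)-F^*$, the inequality reads $a_k\le(1-\tfrac{\rho_k\mu}{2})\,\mathbb{E}[\|x_k-x^*\|^2]+\cdots$ with $\rho_k=\alpha_k(2-\alpha_k\mathcal{L})$. I then retain a summand $\gamma_k\,\mathbb{E}[\|x_k-x^*\|^2]$ on the left and bound the remaining $\|x_k-x^*\|^2$ by $a_{k-1}-C_{\beta,c,\mathcal{B}_h}\text{dist}^{2q}$ via Lemma \ref{lem:xkvk-1}. Inserting $\alpha_k=\tfrac{4}{\mu}\gamma_k$ gives $\tfrac{\rho_k\mu}{2}=4\gamma_k-\tfrac{8\mathcal{L}}{\mu}\gamma_k^2$, so the coefficient multiplying $a_{k-1}$ equals $1-\tfrac{\rho_k\mu}{2}+\gamma_k$; since $\alpha_k\mathcal{L}\le 1$ for $k>k_0$ (because $k+1>\tfrac{8\mathcal{L}}{\mu}$ forces $\alpha_k\le\tfrac1{\mathcal{L}}$), this coefficient is $\le 1-\gamma_k$, and its nonnegativity lets me propagate the bound. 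Collecting the surviving $\gamma_k\,\mathbb{E}[\|x_k-x^*\|^2]$ and $\tfrac16 C_{\beta,c,\mathcal{B}_h}\text{dist}^{2q}$ on the left yields precisely the claimed recurrence for $k>k_0$.

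For the bound on $\mathbb{E}[\|v_{k_0}-x^*\|^2]$ I run the same computation in the constant-stepsize regime $k\le k_0$, where $\gamma_k\equiv\tfrac{\mu}{4\mathcal{L}}$ and $\alpha_k\equiv\tfrac1{\mathcal{L}}$. Dropping the nonnegative $\gamma_k\|x_k-x^*\|^2$ and $\text{dist}^{2q}$ terms on the left leaves a scalar recurrence $a_k\le\theta_{\mathcal{L},\mu}\,a_{k-1}+\text{(noise)}$ with $\theta_{\mathcal{L},\mu}=1-\tfrac{\mu}{4\mathcal{L}}$ and noise of order $\big(1+\tfrac{2}{C_{\beta,c,\mathcal{B}_h}\theta_{\mathcal{L},\mu}}\big)\tfrac{\mathcal{B}^2}{\mathcal{L}^2}$. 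When $\theta_{\mathcal{L},\mu}>0$ I unroll this geometric recursion from $k=0$ to $k_0$, the partial sum $\sum_{j=0}^{k_0-1}\theta_{\mathcal{L},\mu}^{\,j}=\tfrac{1-\theta_{\mathcal{L},\mu}^{k_0}}{1-\theta_{\mathcal{L},\mu}}$ producing the stated expression; when $\theta_{\mathcal{L},\mu}\le 0$ (i.e.\ $\mu\ge 4\mathcal{L}$, so $k_0$ is tiny) the contraction factor is nonpositive and the iterate is simply dominated by the noise level $\tfrac{\mathcal{B}^2}{\mathcal{L}^2}$, giving the first branch.
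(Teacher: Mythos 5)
Your reconstruction follows what is essentially the intended route: the paper gives no proof here beyond citing Lemma 8 of \cite{NecSin:21}, and the recurrence is indeed obtained by chaining Lemma \ref{th:spg_basic} with Lemma \ref{lem:xkvk-1}, using that $\mathcal{X}^*=\{x^*\}$ under $\mu>0$, and injecting \eqref{as:strong_spg}. You correctly identify the real crux — $x_k$ may be infeasible, so $F(x_k)-F^*$ (equivalently the cross term $\langle \nabla F(x^*),x_k-x^*\rangle$) can be negative — and your fix (split through $\Pi_{\mathcal{X}}(x_k)$, use the optimality condition of $x^*$ over $\mathcal{X}$ for the chosen subgradient, bound $\|\nabla F(x^*)\|\le B\le \mathcal{B}$ via \eqref{as:main1_spg2}, and absorb $\rho_k B\,\mathrm{dist}(x_k,\mathcal{X})$ into the feasibility decrease by Young) is exactly what generates the stated constants: for $q=1$, $\rho_k B\, d\le \tfrac{1}{6}C_{\beta,c,\mathcal{B}_h}d^2+\tfrac{3\rho_k^2B^2}{2C_{\beta,c,\mathcal{B}_h}}$ with $\rho_k\le 2\alpha_k=\tfrac{8}{\mu}\gamma_k$ reproduces the $\tfrac16$ and the $\big(1+\tfrac{6}{C_{\beta,c,\mathcal{B}_h}}\big)\tfrac{16}{\mu^2}\gamma_k^2\mathcal{B}^2$ verbatim, and the factor $1+\tfrac{2}{C_{\beta,c,\mathcal{B}_h}\theta_{\mathcal{L},\mu}}$ in the $k\le k_0$ branch arises from the same absorption with $\rho_k=\alpha_k=1/\mathcal{L}$. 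Your algebra for the contraction coefficient ($\tfrac{\rho_k\mu}{2}=4\gamma_k-\tfrac{8\mathcal{L}}{\mu}\gamma_k^2$, hence $1-\tfrac{\rho_k\mu}{2}+\gamma_k\le 1-\gamma_k$ precisely because $\gamma_k\le\mu/(4\mathcal{L})$ for $k>k_0$) is also correct.

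Two gaps remain, one substantive. First, your Young step only delivers the claimed $\gamma_k^2\mathcal{B}^2$ remainder when $q=1$. For $q>1$ the conjugate exponents are $2q$ and $\tfrac{2q}{2q-1}$, so absorbing $\rho_kB\,\mathrm{dist}(x_k,\mathcal{X})$ into $\tfrac16C_{\beta,c,\mathcal{B}_h}\,\mathrm{dist}^{2q}(x_k,\mathcal{X})$ leaves a term of order $(\rho_kB)^{2q/(2q-1)}\sim\gamma_k^{2q/(2q-1)}$, which decays strictly slower than $\gamma_k^2$; one cannot bound $d$ by $d^{2q}$ up to an $O(\gamma_k^2)$ error without extra information (e.g., a uniform bound on $\mathrm{dist}(x_k,\mathcal{X})$, or restricting to $q=1$). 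You flag "matching the Young exponents" as delicate, but as written this step fails for $q>1$ and the recurrence as stated does not follow; this is a defect you should either repair (by tracking the weaker exponent) or isolate by restricting to $q=1$. Second, a minor point: the coefficient $1-\tfrac{\rho_k\mu}{2}+\gamma_k=1-3\gamma_k+\tfrac{8\mathcal{L}}{\mu}\gamma_k^2$ can be negative for the first few indices $k>k_0$ when $k_0$ is small (e.g., $k_0=0$, $k=1$, $\gamma_1=1$ gives $\tfrac{8\mathcal{L}}{\mu}-2<0$), in which case you may not multiply the inequality of Lemma \ref{lem:xkvk-1} by it; the bound must then be argued separately (the term is simply nonpositive, and the $\mathrm{dist}^{2q}$ contribution must be imported with its own positive weight), which requires a small additional check on the constants.
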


\noindent Let us define for $k \geq k_0+1$ the sum: 
\begin{align*}
	S_k = \sum_{t=k_0+1}^{k} (t+1)^2 \sim \mathcal{O} (k^3 + k_0^2k + k^2k_0)
\end{align*}
and  the corresponding average sequences: 
\begin{align*}
	&\hat{x}_k = \frac{\sum_{t=k_0+1}^{k} (t+1)^2 x_t}{S_k}, \quad	
	\text{and} \quad   \hat{w}_k = \frac{\sum_{t=k_0+1}^{k} (t+1)^2 \Pi_{\mathcal X}(x_t)}{S_k} \in \mathcal{X}. 
\end{align*}

\begin{theorem}
	\label{th:strcase2}
	Let  $f_i, \; g_i$, with $i=1:N$ and  $h_j$, with $j=1:m$, be  convex functions.  Additionally,  Assumptions \ref{assumption1}--\ref{assumption4} hold and the random vectors $\zeta,\; \xi$ are non-negative.  Further, consider the stepsizes-switching rule  $\alpha_k = \min\left(\frac{1}{\mathcal{L}}, \frac{8}{\mu (k + 1)}\right)$, $\beta \in \left(0,2\right)$ and \blue{$k_0 = \lfloor\frac{8 \mathcal{L}}{\mu} - 1\rfloor$}. Then, for $k> k_0$ we have the following sublinear convergence rates  for the  average sequence $	\hat{x}_k $ in terms of  optimality and feasibility violation for problem \eqref{eq:prob} (keeping only the dominant terms):
	
	\begin{align*}
		&\mathbb{E}\left[\|\hat{x}_k - x^*\|^2 \right] \leq \red{\mathcal{O}\left(      \frac{\mathcal{B}^2}{\mu^2  C_{\beta,c,\mathcal{B}_h}\, (k + 1)} \right)},\\
		&\mathbb{E}\left[\emph{dist}^2(\hat{x}_k, \mathcal{X})\right]  \leq \mathcal{O}\left(  \frac{\mathcal{B}^{2/q}}{\mu^{2/q} \red{C_{\beta,c,\mathcal{B}_h}^{2/q}} (k+1)^{2/q}}\right).
	\end{align*}
\end{theorem}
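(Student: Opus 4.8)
The plan is to convert the one-step recurrence of Lemma~\ref{lem:reccStrongCon} (the bound valid for $k>k_0$) into a weighted telescoping sum and then pass to the average sequence $\hat{x}_k$ by convexity. First I would observe that, for $k>k_0$, the switching rule $\alpha_k=\min\!\left(\frac{1}{\mathcal{L}},\frac{8}{\mu(k+1)}\right)$ reduces to $\alpha_k=\frac{8}{\mu(k+1)}$, i.e. $\gamma_k=\frac{2}{k+1}$, so Lemma~\ref{lem:reccStrongCon} applies directly. Writing $a_k=\mathbb{E}[\|v_k-x^*\|^2]$, $e_k=\mathbb{E}[\|x_k-x^*\|^2]$, $f_k=\mathbb{E}[\text{dist}^{2q}(x_k,\mathcal{X})]$ and $D=\left(1+\frac{6}{C_{\beta,c,\mathcal{B}_h}}\right)\frac{16}{\mu^2}\mathcal{B}^2$, the recurrence reads $a_k+\gamma_k e_k+\frac{1}{6}C_{\beta,c,\mathcal{B}_h}f_k\le(1-\gamma_k)a_{k-1}+\gamma_k^2 D$.

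The key algebraic step is to multiply this inequality by the weight $(k+1)^2$. Since $(1-\gamma_k)(k+1)^2=(k-1)(k+1)=k^2-1\le k^2$ and $\gamma_k^2(k+1)^2=4$, this yields
\begin{equation*}
(k+1)^2 a_k+2(k+1)e_k+\frac{1}{6}C_{\beta,c,\mathcal{B}_h}(k+1)^2 f_k\le k^2 a_{k-1}+4D .
\end{equation*}
Summing from $t=k_0+1$ to $k$ telescopes the $a$-terms, and after discarding the nonnegative $(k+1)^2 a_k$ one obtains
\begin{equation*}
\sum_{t=k_0+1}^{k}2(t+1)e_t+\frac{1}{6}C_{\beta,c,\mathcal{B}_h}\sum_{t=k_0+1}^{k}(t+1)^2 f_t\le (k_0+1)^2 a_{k_0}+4D(k-k_0).
\end{equation*}
I would then bound the transient $(k_0+1)^2 a_{k_0}$ via the first (non-recursive) estimate of Lemma~\ref{lem:reccStrongCon} on $\mathbb{E}[\|v_{k_0}-x^*\|^2]$; since $k_0=\lfloor 8\mathcal{L}/\mu-1\rfloor$ this is a constant in $k$, hence dominated by $4D(k-k_0)\sim 4Dk$.

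The two target quantities are then extracted by convexity. For feasibility the sum carries the weight $(t+1)^2$, matching $S_k=\sum_{t=k_0+1}^k(t+1)^2\sim k^3/3$ and the definition of $\hat{x}_k$, so Jensen's inequality gives $\mathbb{E}[\text{dist}^{2q}(\hat{x}_k,\mathcal{X})]\le S_k^{-1}\sum_{t}(t+1)^2 f_t=\mathcal{O}\!\left(D/(C_{\beta,c,\mathcal{B}_h}k^2)\right)$; applying the power-$q$ Jensen inequality $(\mathbb{E}[\text{dist}^2])^q\le\mathbb{E}[\text{dist}^{2q}]$ exactly as in Theorem~\ref{th:nonstrongconv} and substituting $D$ produces the stated $\mathcal{O}\!\left((\mathcal{B}^2/(\mu^2 C_{\beta,c,\mathcal{B}_h}^2(k+1)^2))^{1/q}\right)$ rate. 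For optimality the weight is instead $(t+1)$; dividing by $\sum_{t=k_0+1}^k(t+1)\sim k^2/2$ and using convexity of $\|\cdot-x^*\|^2$ gives $\mathcal{O}(D/k)=\mathcal{O}\!\left(\mathcal{B}^2/(\mu^2 C_{\beta,c,\mathcal{B}_h}(k+1))\right)$. To keep the same $(t+1)^2$-weighted $\hat{x}_k$ in both bounds, one can alternatively combine the non-expansiveness relation $\|x_t-x^*\|\le\|v_{t-1}-x^*\|$ (the update $v_{t-1}\mapsto z_{t-1}$ is a $\beta$-relaxed projection onto a halfspace containing $\mathcal{X}\ni x^*$, and $\Pi_{\mathcal{Y}}$ fixes $x^*$) with the last-iterate decay $a_t=\mathcal{O}(D/t)$, which follows from the same recurrence after dropping the nonnegative left-hand terms.

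The main obstacle is bookkeeping rather than a single hard estimate: one must track that multiplication by $(k+1)^2$ leaves the optimality term with weight $(t+1)$ but the feasibility term with weight $(t+1)^2$, and then normalize each by the matching sum ($\sim k^2$ versus $\sim k^3$); getting these weights right is precisely what produces the $1/(k+1)$ versus $(1/(k+1)^2)^{1/q}$ separation in the two rates. The secondary technical points are verifying that the transient $(k_0+1)^2\mathbb{E}[\|v_{k_0}-x^*\|^2]$ is genuinely lower order (so it may be discarded when \emph{keeping only the dominant terms}) and carrying the constant $D=\left(1+\frac{6}{C_{\beta,c,\mathcal{B}_h}}\right)\frac{16}{\mu^2}\mathcal{B}^2$ through the substitution to recover the explicit $\mathcal{B}^2/\mu^2$ and $C_{\beta,c,\mathcal{B}_h}$ dependence.
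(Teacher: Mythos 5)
Your proposal is correct and follows essentially the same route as the paper: multiply the recurrence of Lemma~\ref{lem:reccStrongCon} by $(k+1)^2$, telescope from $k_0+1$ to $k$, treat $(k_0+1)^2\mathbb{E}[\|v_{k_0}-x^*\|^2]$ as a constant transient, and pass to the average sequence by Jensen. The only point of divergence is the optimality term: the paper keeps the $(t+1)^2$-weighted average $\hat{x}_k$ by bounding $\sum_t 2(t+1)\mathbb{E}[\|x_t-x^*\|^2]\ge \frac{2S_k}{k+1}\mathbb{E}[\|\hat{x}_k-x^*\|^2]$ (i.e.\ using $(t+1)\ge (t+1)^2/(k+1)$), which is slightly more direct than your primary $(t+1)$-weighted normalization (which bounds a different average) or your non-expansiveness/last-iterate fallback, though both of your fixes are valid.
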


\begin{proof}
	Using Lemma \ref{lem:reccStrongCon}, we get the recurrence:
	\begin{align*}
		(k+1)^2\mathbb{E}[\|v_k - x^*\|^2] & +2(k+1) \mathbb{E}[\| x_k - x^*\|^2]  + \frac{C_{\beta,c,\mathcal{B}_h}}{6} (k+1)^2\mathbb{E}[\text{dist}^{2q}(x_{k}, \mathcal{X})]\\
		&  \leq k^2 \mathbb{E}[\|v_{k-1}- x^*\|^2] + \red{\left( 1 + \frac{6}{C_{\beta,c,\mathcal{B}_h} } \right)} \frac{64}{\mu^2} \mathcal{B}^2 \quad  \forall k> k_0.
	\end{align*}
	Summing  this inequality from $k_0+1$ to $k$ and using linearity of the expectation operator and convexity of the norm, we get: 
	\begin{align*}
		&{(k+1)^2}\mathbb{E}[\|v_k - x^*\|^2]+ \frac{2 S_k}{(k+1)}\mathbb{E} [\|\hat{x}_k- x^*\|^2] + \frac{S_k C_{\beta,c,\mathcal{B}_h}}{6}  \mathbb{E}[\|\hat{w}_k-\hat{x}_k\|^{2q}]  \nonumber\\
		& \leq (k_0+1)^2 \mathbb{E}[\|v_{k_0} -x^* \|^2] + \red{\left( 1 + \frac{6}{C_{\beta,c,\mathcal{B}_h} } \right)} \frac{64}{\mu^2} \mathcal{B}^2(k-k_0). 
	\end{align*}
	After simple calculations and keeping only the dominant terms, we get the following convergence rate for the average sequence $\hat{x}_k$ in terms of optimality:
	\begin{align*}
		&\mathbb{E} [\|\hat{x}_k - x^*\|^2] \leq \mathcal{O} \left( \frac{\mathcal{B}^2}{\mu^2 \red{C_{\beta,c,\mathcal{B}_h} }\, (k+1)} \right), \\
		& \left( \mathbb{E}[\|\hat{w}_k-\hat{x}_k\|^{2}] \right)^q \leq  \mathbb{E}[\|\hat{w}_k-\hat{x}_k\|^{2q}] \leq \mathcal{O}\left( \frac{\mathcal{B}^2}{\mu^2\red{C^2_{\beta,c,\mathcal{B}_h}} \,  (k+1)^2 }\right).
	\end{align*}
	Since $\hat{w}_k \in \mathcal{X}$,  we get the following convergence rate for the average sequence $\hat{x}_k$ in terms of  feasibility violation: 
	\begin{align*}
		\mathbb{E}[\text{dist}^2(\hat{x}_k,\mathcal{X})] &\leq \mathbb{E}[\|\hat{w}_k-\hat{x}_k\|^2]  \leq \mathcal{O}\left( \frac{\mathcal{B}^2}{\mu^2\red{C^2_{\beta,c,\mathcal{B}_h}} \, (k+1)^2 }\right)^{\frac{1}{q}}.
	\end{align*}
These prove our statements. 	
\end{proof}

\noindent Note that our previous theoretical convergence analysis naturally imposes a stepsize-switching rule which describes when one should switch from a constant regime (depending on mini-batch size $\tau_1$) to a decreasing stepsize regime, i.e.,  $\alpha_k = \min\left(\frac{1}{\mathcal{L}}, \frac{8}{\mu (k + 1)}\right)$. For the particular choice of the stepsize  $\beta =1$, we have (see \eqref{C}):
\begin{align*}
	C_{1,c,\mathcal{B}_h}= \left(\frac{1}{c\mathcal{B}_h^2}\right) > 0,
\end{align*}
since  we can always choose $c$ such that $c\mathcal{B}_h^2 >1$.  Using this expression  in the convergence rates of Theorem \ref{th:strcase2}, we obtain: 
\begin{align*}
    &\mathbb{E}\left[\|\hat{x}_k - x^*\|^2 \right] \leq \red{\mathcal{O}\left(  \frac{\mathcal{B}^2\red{(c\mathcal{B}_h^2)}}{\mu^2 (k + 1)}\right)},\\
	&\mathbb{E}\left[\text{dist}^2(\hat{x}_k, \mathcal{X})\right]  \leq \mathcal{O}\left( \frac{\mathcal{B}^2 \red{(c\mathcal{B}_h^2)^2}}{\mu^2 (k+1)^2} \right)^{1/q}.
\end{align*}

\noindent By replacing the values for $\mathcal{L}$, $\mathcal{B}$, $\mathcal{B}_h$ and $c$ from Theorem \ref{sampling_cases} for both types of sampling, i.e., partition or $\tau_1$, $\tau_2$-nice samplings, we get:
 \begin{align*}
&\mathbb{E}\left[\|\hat{x}_k - x^*\|^2 \right] \leq \red{\mathcal{O}\left( \frac{m}{\tau_2} \frac{N}{\tau_1} \cdot \frac{B^2 \bar{c} \max_{ j=1:m}^2 B_j}{\mu^2 (k+1)}\right)},\\
&\mathbb{E}\left[\text{dist}^2(\hat{x}_k, \mathcal{X})\right]  \leq \red{ \mathcal{O}\left(\left(\frac{m}{\tau_2}\right)^2 \frac{N}{\tau_1} \frac{B^2 \bar{c}^2 \max_{ j=1:m}^4 B_j }{ \mu^2 (k+1)^2 } \right)^{1/q}.}
\end{align*}

\noindent One can easily see that also in this case the obtained rates have linear dependence on the mini-batch sizes $(\tau_1, \tau_2)$. Therefore,  Theorem \ref{th:strcase2} also proves that in the quadratic growth convex  case a mini-batch variant of the  stochastic subgradient projection scheme with  a stepsize-switching rule brings  benefits over the nonmini-batch~variant.

\section{Numerical simulations}
\noindent In this section, we consider a general quadratic program with quadratic constraints:  

\begin{equation}
\label{eq:Numprob}
\begin{array}{rl}
\min_{x \in \mathbb{R}^n} & \frac{1}{2} \|Ax - b\|^2 + \|\Delta x\|_1\\
\text{subject to } & Cx + d \ge 0, \;\; c_i^{T}x +d_i \ge \|Q_i^{-1/2} x\| \quad \forall i = 1:m,
\end{array}
\end{equation}

\noindent with the matrices $A \in \mathbb{R}^{N \times n}$, $\Delta  \in \mathbb{R}^{N \times n}$, $C \in \mathbb{R}^{m \times n}$,  $Q_i \in \mathbb{R}^{m_i \times n}$ and $c_i \in \mathbb{R}^n$, with $i=1:m$. One can notice that this problem fits into our general modeling framework \eqref{eq:prob}  (e.g., define $f_i(x) = 1/2(a_i^Tx-b_i)^2$, with $a_i$ the $i$th row of matrix $A$, $g_i(x) =  \|\delta_i^T x\|_1$, with $\delta_i$ the $i$th row of matrix $\Delta$, for all $i=1:N$, and $h_j$ are either linear or quadratic constraints, for all $j=1:2m$). Moreover, \eqref{eq:Numprob} is a general constrained Lasso problem which  appears in many applications from machine learning, signal processing and statistics, see \cite{BhaGra:04,NecSin:21, JamRus:19, GaiZho:18, HuLin:15}. In particular  if one considers appropriate matrices $A$, $\Delta$, $C$ and $Q_i$, one can recast the robust (sparse) SVM problem from \cite{BhaGra:04,NecSin:21} as  problem \eqref{eq:Numprob}. Indeed, the robust (sparse) SVM problem is defined as \cite{BhaGra:04,NecSin:21}:
\begin{align*}
& \min_{w,d,u}  \; \frac{\lambda}{2} \|w\|^2 + \delta \sum_{i=1}^m u_i +  \|w\|_1  \\
& \text{subject to:}  \; u\ge 0, \;  y_i (w^T \bar{z}_i + d) \geq 1 \!-\! u_i, \\
& \qquad \qquad \;\;\;    y_i(w^T \bar{z}_i + d) \geq \|Q_i^{-1/2} w\| -  u_i \;\;\; \forall i=1\!:\!m,
\end{align*}
where $(\bar z_i)_{i=1}^m$ is the training dataset, $(y_i)_{i=1}^m \in \{-1,1\}$ are the corresponding labels, $Q_i$'s are diagonal matrices with positive entries, \blue{$u_i$'s are the slack variables}, $\delta>0$ and $(w,d) \in \mathbb{R}^{n} \times \mathbb{R}$ are the parameters of the hyperplane to separate the data \blue{ (see Section $5.1$ in \cite{NecSin:21} for more details)}.\\

\noindent In the numerical experiments we consider random matrices  $A$ and $C$ and diagonal matrices $\Delta$ and $Q_i$, all generated from normal distributions. We consider as epoch  $ \max \left(  \frac{N}{\tau_1}, \frac{m}{\tau_2}\right)$ iterations of \texttt{Mini-batch SSP} algorithm and our stopping criteria are $\|\max(0, h(x))\|_2 \le 10^{-2}$ and $F(x) - F^* \leq 10^{-2}$ (we consider \texttt{CVX} solution \cite{GraBoy:13} for computing $F^*$, when \texttt{CVX} finishes in a reasonable time). The codes are written in Matlab and run on a PC with i7 CPU at 2.1 GHz and 16 GB RAM memory.\\ 

\noindent Figure \ref{fig_big_d} shows the convergence behaviour of \texttt{Mini-batch SSP} algorithm  along epochs with four different choices for mini-batch sizes $(\tau_1, \tau_2)$ as $(1,1),\; (20,80), \; (60,160)$ and $(N = 120,m = 240)$ in terms of optimality (left) and feasibility (right) for solving the constrained Lasso problem \eqref{eq:Numprob} with $N = 120, n=110,  m = 240$. As we can see from this figure, increasing the minibatch sizes ($\tau_1,\tau_2$) leads to better convergence than the nonmini-batch counterpart, as our theory also predicted. 
\begin{figure}[ht]
	\centering 
	\includegraphics[height=6cm, width=7cm]{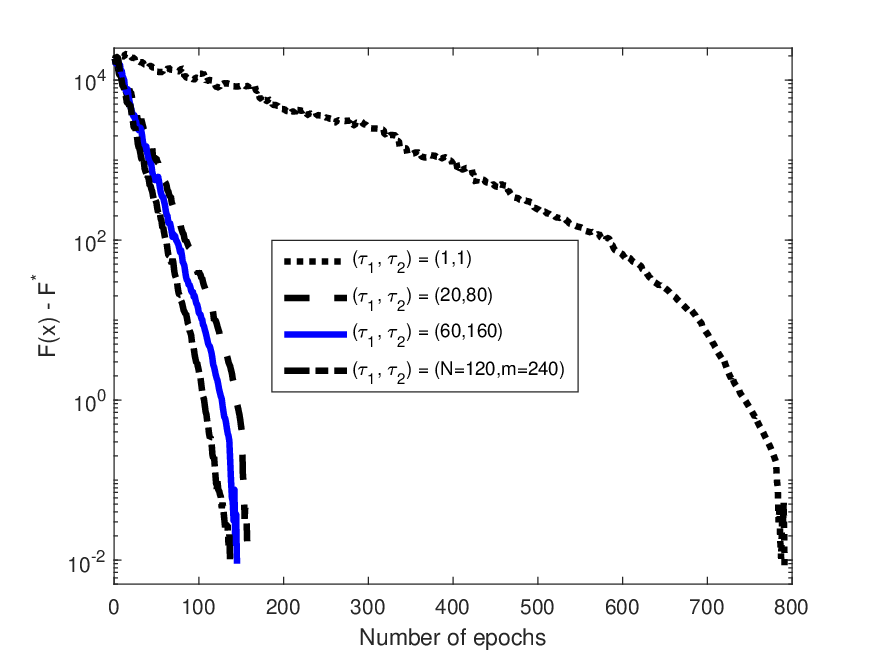}
	\includegraphics[height=6cm, width=7cm]{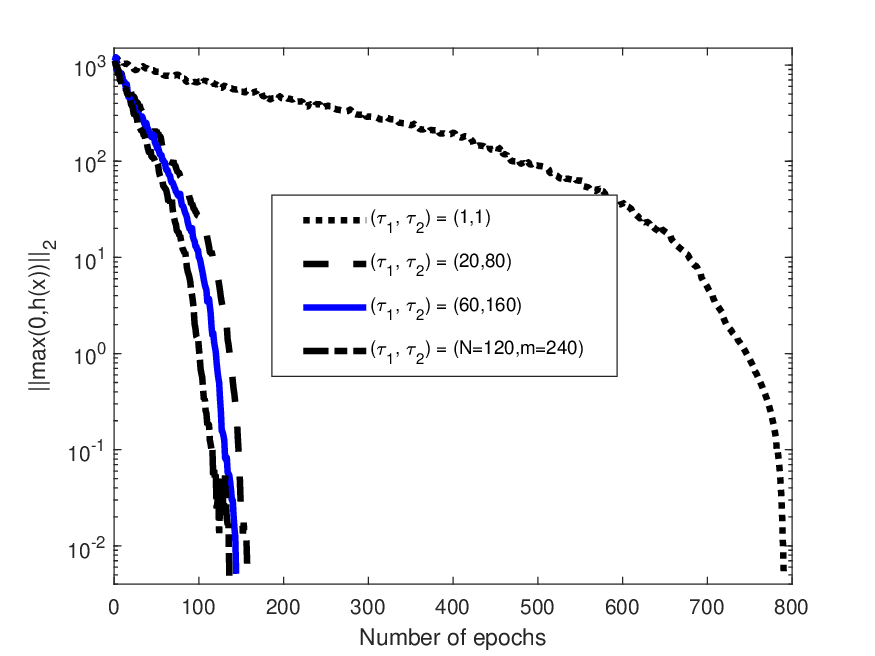}
	\caption{Behaviour of \texttt{Mini-batch SSP} algorithm in terms of optimality (left) and feasibility (right) for $N = 120, n=110,  m = 240$ and different mini-batch sizes ($\tau_1,\tau_2$).}
	\label{fig_big_d}
\end{figure}

\medskip	
\noindent  Finally, in Table \ref{tab_small_d}  we compare  \texttt{Mini-batch SSP} algorithm with \texttt{CVX} in terms of cpu time (in seconds) for solving  problem \eqref{eq:Numprob} over different dimensions of the problem ranging from several hundreds to thousands of functions ($N$) and constraints ($m$), respectively (note that if $N<n$, then the objective function $F$ is convex, otherwise $F$ is strongly convex). For  \texttt{Mini-batch SSP} algorithm we consider four different choices for mini-batch sizes and in the table we also give the number of epochs. The results we present in the table is the average of $10$ runs on the same problem.  From the table we observe that for some choices of mini-batch sizes \texttt{Mini-batch SSP} algorithm is even $10$ times faster than \texttt{CVX} ("*" means that \texttt{CVX} has not finished after 3 hours). Moreover,  \texttt{Mini-batch SSP} is much faster than its nonmini-batch counterpart.  

\begin{table}[ht]
	\centering
	\begin{tabular}{|c|c|l|l|l|}
		\hline
		\textbf{Sizes}                                                              & \multicolumn{1}{l|}{\textbf{\begin{tabular}[c]{@{}l@{}}\texttt{Mini-batch SSP}\\ sizes($\tau_1$, $\tau_2$)\end{tabular}}} & \textbf{epochs} & \textbf{cpu time} & \multicolumn{1}{l|}{\textbf{\begin{tabular}[c]{@{}l@{}}\texttt{CVX}\\ cpu time\end{tabular}}}     \\ \hline
		\multirow{4}{*}{\begin{tabular}[c]{@{}c@{}}N = 120,\\ m = 240,\\n = 110\end{tabular}} & (1, 1)                                                                                                                      &     655     &    0.17       & \multirow{4}{*}{} \\ \cline{2-4}
		& (20, 80)                                                                                                      &     148     &   0.06    &        1.44           \\ \cline{2-4}
		& (60, 160)                                                                                                   &       131       &  0.07         &              \\ \cline{2-4}
		& (N, m)                                                                                                        &     166      &  0.11       &                   \\ \hline
		\multirow{4}{*}{\begin{tabular}[c]{@{}c@{}}N = 100,\\ m = 240,\\ n = 110\end{tabular}} & (1, 1)                                                                                                                     &     1023      &        0.25       & \multirow{4}{*}{} \\ \cline{2-4}
		& (20, 80)                                                                                                         &     202     &       0.08      &     1.51             \\ \cline{2-4}
		& (60, 160)                                                                                                     &    175   &       0.08      &               \\ \cline{2-4}
		& (N, m)                                                                                                       &  357    &       0.21    &                   \\ \hline
		\multirow{4}{*}{\begin{tabular}[c]{@{}c@{}}N = 1200,\\ m = 2400,\\n = 1100\end{tabular}} & (1, 1)                                                                                                              &  8131 &      51.94      & \multirow{4}{*}{} \\ \cline{2-4}
		& (200, 800)                                                                                                 &    958      &    9.38   &   177.08       \\ \cline{2-4}
		& (600, 1600)                                                                                               &    713     &   9.59   &              \\ \cline{2-4}
		& (N, m)                                                                                                   & 2327      &    48.81    &                   \\ \hline
		\multirow{4}{*}{\begin{tabular}[c]{@{}c@{}}N = 1000,\\ m = 2400,\\ n = 1100\end{tabular}} & (1, 1)                                                                                                                &    13115   &  66.15       & \multirow{4}{*}{} \\ \cline{2-4}
		& (200, 800)                                                                                                  &   1983     &  14.70       &       179.67         \\ \cline{2-4}
		& (600, 1600)                                                                                                 &       1158  &      12.07  &            \\ \cline{2-4}
		& (N, m)                                                                                                      &   5771      &     61.33   &                   \\ \hline
		\multirow{4}{*}{\begin{tabular}[c]{@{}c@{}}N = 3600,\\ m = 7200,\\n = 3300\end{tabular}}                         & (1, 1)         & 19491   &     2008.60      & \multirow{4}{*}{} \\ \cline{2-4}
		& (600, 2400)                 &   298      &   52.94   &   *      \\ \cline{2-4}
		& (1800, 4800)           &    1432     &  387.91    &              \\ \cline{2-4}
		& (N, m)       &   1200   &    464.79   &                   \\ \hline
		\multirow{4}{*}{\begin{tabular}[c]{@{}c@{}}N = 3000,\\ m = 7200,\\ n = 3300\end{tabular}}          & (1, 1)             &   40168  &    3618.37    & \multirow{4}{*}{} \\ \cline{2-4}
		& (600, 2400)          &  2990    &  457.99     &        *     \\ \cline{2-4}
		& (1800, 4800)        &  2130     &  471.87     &            \\ \cline{2-4}
		& (N, m)              &  24903  & 7260.44      &                   \\ \hline
	\end{tabular}
	\caption{Comparison between \texttt{Mini-batch SSP} and \texttt{CVX} for different dimensions  and mini-batch sizes.}
	\label{tab_small_d}
\end{table}

\section{Conclusions}
In this paper we have considered a deterministic general finite sum composite optimization problem with many functional constraints. We have reformulated this problem into a  stochastic problem  for which the stochastic subgradient projection method  from \cite{NecSin:21} specializes  to an infinite array of mini-batch variants, each of which is associated with a specific probability law governing the data selection rule used to form mini-batches.  By specializing different mini-batching strategies,  we have derived exact expressions for the stepsizes as a function of  the mini-batch size and in some cases we have derived  stepsize-switching rules which describe when one should switch from a constant to a decreasing stepsize regime.  We  have also proved sublinear convergence rates for the mini-batch subgradient projection algorithm which depend explicitly on the mini-batch sizes and on the properties of the objective function.  Preliminary numerical results support the effectiveness of our method in practice. 


\section*{Funding}
The research leading to these results has received funding from: the NO Grants 2014–2021 RO-NO-2019-0184, under project ELO-Hyp, contract no. 24/2020; UEFISCDI PN-III-P4-PCE-2021-0720, under project L2O-MOC, nr. 70/2022 for N.K. Singh and I. Necoara.  The OP VVV project
CZ.02.1.01/0.0/0.0/16\_019/0000765 Research Center for Informatics for V. Kungurtsev.

\end{document}